\documentclass[11pt]{amsart}
\usepackage[utf8]{inputenc}
\usepackage{amssymb, latexsym, amsmath, amsfonts, enumitem, fancyhdr}
\usepackage{amscd}
\usepackage{mathrsfs}

\usepackage{hyperref}
\usepackage{cleveref}

\newtheorem{thm}{Theorem}[section]

\newtheorem{cor}[thm]{Corollary}
\newtheorem{lem}[thm]{Lemma}
\newtheorem{prop}[thm]{Proposition}
\newtheorem{fact}[thm]{Fact}

\theoremstyle{definition}
\newtheorem{defn}[thm]{Definition}
\newtheorem{definition}[thm]{Definition}

\theoremstyle{remark}

\newtheorem{rem}[thm]{Remark}

\numberwithin{equation}{section}

\hypersetup{
  colorlinks,
  citecolor=black,
  linkcolor=black,
  urlcolor=blue
}

\theoremstyle{plain}

  {\end{enumerate}}

\newcommand{\C}{\mathbb{C}}

\newcommand{\T}{\mathbb{T}}

\hypersetup{
  colorlinks,
  citecolor=blue,
  linkcolor=blue,
  urlcolor=blue
}

\begin{document}

\title{Failure of Rational Dilation on the Tetrablock via Parrott Homomorphisms}
\author[Shubhankar Mandal]{Shubhankar Mandal}
\author[Samya Kumar Ray]{Samya Kumar Ray}
\date{\today}

\email{shubho0204@gmail.com}

\address[Samya Kumar Ray]{%
The Institute of Mathematical Sciences, 4th Cross Street, CIT Campus, Tharamani, Chennai, Tamil Nadu 600113, India
\and
Homi Bhabha National Institute, Training School Complex, Anushakti Nagar, Mumbai 400094, India%
}
\email{samya@imsc.res.in}

\keywords{tetrablock, rational dilation, operator spaces, Parrott homomorphisms}

\pagestyle{headings}

\begin{abstract}
We determine the cotangent operator space structure at the origin of the tetrablock (denoted by $\mathbb E$). In particular we prove
that $\operatorname{COT}_0(\overline{\mathbb E})
  =\operatorname{MIN}(\ell_1^2\oplus_\infty\mathbb C)$
completely isometrically. This shows the existence of commuting tuple of operators having $\overline{\mathbb E}$ as a spectral set but not as a complete spectral set.
\end{abstract}

\maketitle
\begingroup
\def\del{\partial}
\let\phi\varphi

The notion of a spectral set is a fundamental tool in the study of operator tuples associated with various complex domains. The development of this direction can be traced back to J. von Neumann’s seminal result establishing that the closed unit disc is a spectral set for every contraction, \cite{vn} which is known as the celebrated von Neumann inequality. We briefly recall this relevant notion and terminology following Arveson’s framework \cite{wa}.

In order to define the notions of spectral set and complete spectral set in the multivariable setting, one requires an appropriate multivariable analogue of the spectrum. Let $\textbf{T}=(T_1,\dots,T_m)$ be an $m$-tuple of commuting bounded operators on a complex Hilbert space $\mathcal{H}$. In this context, L. Waelbroeck introduced the joint spectrum $\text{sp}(\textbf{T})$ of $\mathbf{T}$ in \cite{lw}, defined as the set of all $w:= (w_1,\dots,w_m)\in \mathbb{C}^m$ such that $p(w)$ belongs to the spectrum of $p(\textbf{T})$ for every polynomial $p$ in $m$-variables. The joint spectrum $\text{sp}(\textbf{T})$ is nonempty and compact. Moreover, if $p$ is a polynomial in $m$-variable with no zeros in $\text{sp}(\textbf{T})$, then $p(\textbf{T})$ is invertible. For further properties of the joint spectrum, we refer to \cite{wa}.

For $\Omega\subseteq\mathbb{C}^m$ a compact set, we define $$\operatorname{Rat}(\Omega)=\left\{ \frac{p}{q}:
p,q\in\mathbb{C}[z_1,\ldots,z_m],\;
q(w)\neq 0\ \text{for every }w\in\Omega
\right\},$$ where $\mathbb{C}[z_1,\ldots,z_m]$ denotes the set of all complex polynomials in $m$-variables. The functions in $\operatorname{Rat}(\Omega)$ form an algebra of holomorphic functions on $\Omega$. Let $\mathbf{T}$ be a commuting tuple of operators on a Hilbert space $\mathcal H$ with $\text{sp}(\mathbf{T})\subseteq\Omega.$ Since the polynomial $q$ is nonvanishing on $\text{sp}(\mathbf{T})$, $q(\mathbf{T})$ is invertible. Thus, for $f=p/q\in\operatorname{Rat}(\Omega)$, we may define
$f(\mathbf{T})=p(\mathbf{T})q(\mathbf{T})^{-1}.$
Moreover, the map $f\rightarrow f(\textbf{T})$ is clearly an algebra homomorphism from $\operatorname{Rat}(\Omega)$ into $\mathcal{B}(\mathcal H)$. A compact set $\Omega\subseteq \mathbb{C}^m$ is called a \textit{spectral set} for $\textbf{T}$ if $\Omega$ contains $\text{sp}(\textbf{T})$ and $$\|f(\textbf{T})\| \leq \sup\{|f(w)|: w \in \Omega\}=:\|f\|_{\Omega}$$ for every $f\in \operatorname{Rat}(\Omega)$.

 For each $k\geq1$, let $M_k(\operatorname{Rat}(\Omega))$ denote the algebra of all $k\times k$ matrices over $\operatorname{Rat}(\Omega)$. For $F=(f_{i,j})_{i,j=1}^k\in M_k(\operatorname{Rat}(\Omega))$ one defines $\|F\|_{M_k(\operatorname{Rat}(\Omega))} = \sup \{\|F(w)\|_{\text{op}} : w \in \Omega\}.$
This makes $M_k(\operatorname{Rat}(\Omega))$ again a normed algebra. Furthermore, the map $F\rightarrow F(\mathbf{T})$ is an algebra homomorphism. We say $\Omega$ is a \textit{complete spectral set} for $\mathbf{T}$ if $\text{sp}(\mathbf{T})\subseteq \Omega$ and $\|F(\textbf{T})\|_{op}\leq \|F\|_{M_k(\operatorname{Rat}(\Omega))}$ for all $F\in M_k(\operatorname{Rat}(\Omega))$ for all $k\geq 1.$

In a remarkable paper Arveson showed that $\Omega$ is a complete spectral set for $\mathbf{T}$ if and only if $\mathbf{T}$ admits a normal $\partial\Omega$-dilation; see \cite[Theorem 1.2.2 and its Corollary]{wa}, which we recall now. Let $\Omega$ be the spectral set for $\mathbf{T}$. We say that $\mathbf{T}$ has a \textit{rational dilation} or \textit{normal $\partial\Omega$-dilation} if there exist a Hilbert space $\mathcal{K}$, an isometry $V:\mathcal{H}\longrightarrow\mathcal{K}$ and a commuting $m$-tuple of normal operators $\mathbf{N}=(N_1,\ldots,N_m)$ on $\mathcal{K}$ with $\text{sp}(\mathbf{N})\subseteq \partial\Omega$ such that $$f(\mathbf{T})=V^*f(\mathbf{N})V,\qquad \text{for every } f\in\operatorname{Rat}(\Omega).$$
Here, $\partial\Omega$ is the \textit{Shilov boundary} of the algebra $\operatorname{Rat}(\Omega)$; see \cite[Chapter 9]{aw,ab} for the relevant definition of the Shilov boundary. It follows from the definition that if $\Omega$ is a complete
spectral set for $\mathbf{T}$, then $\Omega$ is also a spectral set
for $\mathbf{T}$. The rational dilation problem for a fixed compact
set $\Omega$ asks whether the converse holds for every commuting
operator tuple $\mathbf{T}$. By Arveson's theorem recalled above,
this is equivalent to asking whether every commuting tuple having
$\Omega$ as a spectral set admits a normal $\partial\Omega$-dilation.

One of the fundamental developments in operator theory is Sz.-Nagy’s unitary dilation theorem for contractions, \cite{bn}, which tells  rational dilation always holds on the closed unit disc of $\mathbb{C}$. Since then an important problem in operator theory has been to determine whether rational dilation holds on the closure of a bounded domain in $\mathbb{C}^m$. Note that the rational dilation holds for the bidisc $\overline{\mathbb{D}}^2$ (And\"{o} \cite{ta}), annulus (Agler \cite{ja}), and for the symmetrized bidisc (Agler-Young \cite{ay}). On the other hand, rational dilation fails, in general on planar domains with two or more holes (Agler-Harland-Raphael \cite{ahr} and Dritschel-McCullough \cite{dm}) and, on any norm unit ball in $\mathbb{C}^m$ for $m\geq3$ (Paulsen \cite{Paulsen1992} and Pisier \cite{Pisier2003}) following Parrott's example for failure of rational dilation on the closed tridisc $\overline{\mathbb{D}^3}$ in \cite{sp}. 

Let $\mathbb{E}$ denote the tetrablock 
\begin{equation}
	\mathbb{E}:=\{x=(x_1,x_2,x_3) \in \mathbb{C}^3:~1-x_1z-x_2w+x_3zw\neq 0~~\text{for all}~~z,w\in \overline{\mathbb{D}}\}.
\end{equation}
In 2007, Abouhajar,Young, and White \cite{AWY2007} introduced tetrablock domain as it arises naturally in connection with $\mu$-synthesis. 
The distinguished boundary of $\overline{\mathbb{E}}$ is denoted by $b\mathbb{E}$ which is also the Shilov boundary \cite[Section 7]{AWY2007}. In \cite{tb}, Bhattacharyya initiated the study of commuting operator tuples for which tetrablock is a spectral set. Such a tuple is referred to as a tetrablock contraction. For a tetrablock contraction, rational dilation is nothing but a tetrablock isometric dilation or a tetrablock unitary dilation (see \cite[Definition 5.3 and 5.5]{tb}). The authors in \cite{bs} constructed a tetrablock contraction that admits a tetrablock unitary dilation while failing to satisfy the proposed necessary condition in \cite{sp1}. This demonstrates that the condition is, in fact, not necessary. They also proposed several alternative necessary conditions for the existence of a tetrablock isometric lift; although they did not obtain an example of a tetrablock contraction that violates any of these conditions. Thus, the validity of rational dilation for the tetrablock remains an open problem. 

 In this article, we solve the rational dilation problem in negative for tetrablock. Our main result is the following (see Corollary \eqref{mosttrivia}).
\begin{thm}\label{mainthm}
There exists $n\in\mathbb N$ and matrices $A_1,A_2,A_3\in M_n$ such that
the commuting tuple
\[
 (T_1,T_2,T_3)\ \text{where}\ T_j=\begin{pmatrix}0&A_j\\0&0\end{pmatrix},
  \qquad j=1,2,3,
\]
has $\overline{\mathbb E}$ as a spectral set but not as a complete
spectral set. Consequently, rational dilation fails on the tetrablock.
\end{thm}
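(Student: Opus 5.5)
The plan is the Paulsen--Pisier route via Parrott homomorphisms. First, for nilpotent tuples of the given form, compute the functional calculus. Since $T_iT_j=0$ for all $i,j$ and the joint spectrum is $\{0\}\subseteq\overline{\mathbb E}$, every $f\in\operatorname{Rat}(\overline{\mathbb E})$ gives
\[
f(\mathbf T)=\begin{pmatrix} f(0)I & \sum_j \partial_j f(0)A_j\\ 0 & f(0)I\end{pmatrix}.
\]
The matricial version holds in the same way. Hence $\overline{\mathbb E}$ is a spectral set (resp.\ complete spectral set) for $\mathbf T$ if and only if a certain map is contractive (resp.\ completely contractive). That map sends the pair $(f(0),\nabla f(0))$, for $f$ in the unit ball of $\operatorname{Rat}(\overline{\mathbb E})$, to the block matrix above. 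By the standard Paulsen argument (composing with a Möbius map so that $f(0)=0$, and using the $2\times 2$ block structure), this reduces to contractivity versus complete contractivity of the linear map
\[
\phi_A:\operatorname{COT}_0(\overline{\mathbb E})\to M_n,\qquad \phi_A(\nabla f(0))=\textstyle\sum_j \partial_jf(0)A_j .
\]
Here $\operatorname{COT}_0(\overline{\mathbb E})$ is the space of gradients at $0$ of functions $f$ with $f(0)=0$ and $\|f\|_{\overline{\mathbb E}}\le1$, equipped with its natural quotient operator space structure.

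The second and central step is to identify $\operatorname{COT}_0(\overline{\mathbb E})$, as in the abstract. At the scalar level, the unit ball should be the set of $(a_1,a_2,a_3)$ with $\max(|a_1|+|a_2|,|a_3|)\le1$. The lower bound comes from test functions. The coordinates $x_1,x_2$ give $|a_1|+|a_2|$-type constraints via the functions $(x_1z+x_2w-\dots)$, using points of $\overline{\mathbb E}$ such as $(z,w,zw)$ and $(\lambda,\mu,\lambda\mu)$ together with $|x_1|\le 1$. The function $x_3$ has sup norm $1$, and combinations such as $\frac{x_1+x_2}{2}$-type functions along with $(x_1-\bar z x_3)/(1-zx_2)$-type Schur maps realize each extreme point. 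The upper bound follows by applying Schwarz lemma along analytic discs through $0$ contained in $\overline{\mathbb E}$: the disc $\zeta\mapsto(\zeta\alpha,\zeta\beta,0)$ with $|\alpha|+|\beta|\le 1$ (note $(x_1,x_2,0)\in\overline{\mathbb E}$ iff $|x_1|+|x_2|\le1$) and the disc $\zeta\mapsto(0,0,\zeta)$. For the matricial structure, one shows the norm on $M_k(\operatorname{COT}_0)$ is the MIN one. This holds because the matricial Schwarz lemma on these same discs gives the upper bound, while the minimal structure is always the smallest; so $\operatorname{COT}_0(\overline{\mathbb E})=\operatorname{MIN}(\ell_1^2\oplus_\infty\C)$. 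I expect this identification, especially exhibiting enough extremal functions to show the scalar unit ball is no larger than $\ell_1^2\oplus_\infty\C$, to be the main obstacle.

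Finally, $\operatorname{MIN}(\ell_1^2\oplus_\infty\C)$ is a 3-dimensional normed space. By Paulsen's result (the failure of $\operatorname{MIN}=\operatorname{MAX}$ in dimension $\ge 3$, \cite{Paulsen1992}; see also \cite{Pisier2003}), there exist $n$ and a linear map $\phi:\operatorname{MIN}(\ell_1^2\oplus_\infty\C)\to M_n$ that is contractive but not completely contractive; explicitly, $\phi(e_i)=A_i$. Choosing these $A_j$ in the theorem makes $\mathbf T$ have $\overline{\mathbb E}$ as a spectral set but not a complete spectral set. Arveson's theorem then gives the failure of rational dilation on $\overline{\mathbb E}$.
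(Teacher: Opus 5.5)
Your reduction to $\operatorname{COT}_0(\overline{\mathbb E})$ and your final appeal to $\operatorname{MIN}\neq\operatorname{MAX}$ match the paper. The identification $\operatorname{COT}_0(\overline{\mathbb E})=\operatorname{MIN}(\ell_1^2\oplus_\infty\mathbb C)$, however, is not established: the hard half is missing. Your two matricial arguments prove the \emph{same} inclusion. The matricial Schwarz lemma along discs constrains the set $\mathcal D^{(k)}_{\Omega,0}$ of derivatives. Minimality of $\operatorname{MIN}$ says $\|\cdot\|_{M_k(\operatorname{MIN})}\le\|\cdot\|_{M_k(\operatorname{COT}_0)}$. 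Each gives only
\[
\mathcal D^{(k)}_{\Omega,0}\subseteq\Bigl\{(B_1,B_2,B_3):\ \sup_{\lambda\in\mathbb T}\|B_1+\lambda B_2\|\le1,\ \|B_3\|\le1\Bigr\}.
\]
The reverse inclusion needs, for every triple on the right, a matrix-valued $F\in M_k(\operatorname{Rat}(\overline{\mathbb E}))$ with $\|F\|\le1$, $F(0)=0$ and $DF(0)$ arbitrarily close to $(B_1,B_2,B_3)$.

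This step cannot be skipped. Knowing only $\operatorname{MIN}\le\operatorname{COT}_0$ is consistent with $\operatorname{COT}_0=\operatorname{MAX}$, and then Paulsen's criterion produces no example. The paper's key ingredient is exactly this construction:
\[
F_\rho(x)=x_1B_1+x_2B_2+\rho(x_3-x_1x_2)B_3\bigl(I+\rho(x_2B_1^*+x_1B_2^*)B_3\bigr)^{-1}.
\]
On $b\mathbb E$ it becomes $P+(1-|z|^2)Q(I+P^*Q)^{-1}$ with $\|P\|\le|z|$ and $\|Q\|<1$. It is contractive because it is the $(1,2)$ entry of a Redheffer product of two contractions (Lemma~\ref{lem:redheffer} and Theorem~\ref{thm:main}). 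Your remark that exhibiting extremal functions shows the ball is ``no larger'' than that of $\ell_1^2\oplus_\infty\mathbb C$ has the directions reversed. Constructing functions enlarges $\mathcal D^{(k)}_{\Omega,0}$; Schwarz-type constraints shrink it. This confusion is what makes the matricial level look easy.

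The scalar step also has errors. The discs $\zeta\mapsto(\zeta\alpha,\zeta\beta,0)$ with $|\alpha|+|\beta|\le1$ yield only $|\alpha a_1+\beta a_2|\le1$ for all such $(\alpha,\beta)$. That is $\max\{|a_1|,|a_2|\}\le1$, since the dual of $\ell_1^2$ is $\ell_\infty^2$, not $|a_1|+|a_2|\le1$. With these discs the matricial Schwarz lemma likewise gives only $\max_j\|B_j\|\le1$. To get the $\ell_1$ bound you need discs along which $|x_1|=|x_2|\to1$, which forces them to bend in $x_3$. The paper uses $\lambda\mapsto(u\lambda,v\lambda,uv\lambda^2)$ with $u,v\in\mathbb T$; these lie in $\mathbb E$ because $1-u\lambda z-v\lambda w+uv\lambda^2zw=(1-u\lambda z)(1-v\lambda w)$.

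For the extreme points $(\beta,0,\alpha)$ and $(0,\alpha,\beta)$, use $(x_1-r\alpha x_3)/(1-r\alpha x_2)$ and its $x_1\leftrightarrow x_2$ counterpart with $r<1$, then pass to the closure. Your version, with $\bar z$ in the numerator and $z$ in the denominator, is not bounded by $1$ on $b\mathbb E$ when $z$ is non-real. For example, $z=ir$ at the point $(i,-i,1)\in b\mathbb E$ gives modulus $(1+r)/(1-r)$.
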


Our approach uses operator space techniques and Parrott-like homomorphisms.
These homomorphisms generalize Parrott's construction \cite{sp} and
have been studied by Misra and his collaborators in
\cite{MR1033916,MR1058968,MR1305512,Misra1994,BagchiMisra1995,BagchiBhattacharyyaMisra2002}.
Related questions concerning contractivity, complete contractivity,
and curvature inequalities are considered in
\cite{BhattacharyyaMisra2005,MisraPal2018}.
Also for further work on contractivity and operator-space structures on
finite-dimensional Banach spaces, see \cite{MR3979938,MR4171378}.

Our proof of Theorem \ref{mainthm} proceeds in three steps. To the best of our knowledge, this is the first time this method has been successfully tried outside unit ball.

\begin{itemize}
    \item[(i)]First, following the work of Misra
and Paulsen, the contractivity and complete contractivity of a
Parrott-like homomorphism on $\operatorname{Rat}(\Omega)$ are reduced
to estimates on directional derivatives at a point
$w\in\operatorname{Int}\Omega$; see
\cite[Lemma~3.3]{Misra1994},
\cite[Theorem~1.1]{BagchiMisra1995}, and
\cite[Theorem~5.4]{Paulsen1992}.
Remarkably, the closures of the collection of scalar derivatives and
their matrix-valued analogues are closed unit balls with respect to some norms that
together defines an operator-space structure.
Following Paulsen \cite[Proposition~3.2]{Paulsen1992}, this is the
cotangent operator space $\operatorname{COT}_w(\Omega)$. This step is well-known.
\item[(ii)] Second, we explicitly compute the matricial normed structure of cotangent operator space for the
tetrablock at the origin and prove the completely isometric
identification (see Theorem~\ref{cor:COT-MIN})
\[
  \operatorname{COT}_0(\overline{\mathbb E})
  =\operatorname{MIN}(\ell_1^2\oplus_\infty\mathbb C).
\]
The scalar computation uses extreme point methods. For the matrix-valued
computation, we construct rational interpolating functions and prove
their contractivity using the Redheffer product
\cite{Redheffer1960,MR1365226}. It is remarkable that $\operatorname{COT}=\operatorname{MIN}$ for a domain like tetrablock which is not a unit ball. For unit ball this fact is always true (see \cite{Paulsen1992}).

\item[(iii)] Finally, as $\operatorname{MIN}(Y)\neq\operatorname{MAX}(Y)$ for every
complex Banach space $Y$ of dimension $\geq 3$; see
\cite[Corollary~3.9]{Pisier2003}.
Taking $Y=\ell_1^2\oplus_\infty\mathbb C$ and applying Paulsen's
argument \cite[Theorem~5.4]{Paulsen1992} gives a Parrott-like
homomorphism on $\operatorname{Rat}(\overline{\mathbb E})$ that is
contractive but not completely contractive. The corresponding
finite-dimensional commuting tuple proves the main result.
This last implication is well-known, but we include short details
for the reader's convenience.
\end{itemize}
The paper is organized as follows. Section~\ref{se:prelimandnota}
recalls the required facts about extreme points, Redheffer products,
operator spaces, and cotangent matricial-norm structure.
In Section~\ref{sec:tetrablock-cotangent}, we compute the collection of scalar and
matrix-valued directional derivative and prove the above identification
in Theorem~\ref{cor:COT-MIN}.
Section~\ref{sec:parrott-rational-dilation} treats Parrott-like
homomorphisms, explains Paulsen's criterion in
Theorem~\ref{thm:cotangent-spectral-set}, and concludes the proof of
the main result in Corollary~\ref{mosttrivia}.

\endgroup

\section{Preliminaries and notations}\label{se:prelimandnota}
Unless mentioned otherwise all vector spaces and linear maps in our paper are over complex field. We denote by $M_{m,n}$ the vector space of all $m\times n$ complex
matrices, and write $M_n=M_{n,n}$. For $1\leq i\leq m$ and
$1\leq j\leq n$, we denote by $e_{ij}$ the matrix unit with $1$ in
the $(i,j)$-th position and zero elsewhere. Clearly,
$\{e_{ij}:1\leq i\leq m,\,1\leq j\leq n\}$ forms a basis of
$M_{m,n}$.

Let $X$ be a vector space. We denote by $M_{m,n}(X)$ the
set of all $m\times n$ arrays $(x_{ij})$ such that $x_{ij}\in X$
for all $1\leq i\leq m$ and $1\leq j\leq n$, and write
$M_n(X)=M_{n,n}(X)$. The map
\[
  \sum_{i=1}^{m}\sum_{j=1}^{n}e_{ij}\otimes x_{ij}
  \longmapsto (x_{ij})_{\substack{1\leq i\leq m\\1\leq j\leq n}}
\]
is a vector space isomorphism from $M_{m,n}\otimes X$ onto
$M_{m,n}(X)$, where $\otimes$ denotes the algebraic tensor product.

For two normed linear spaces $X$ and $Y$, we denote by
$\mathcal B(X,Y)$ the space of all bounded linear maps from $X$
to $Y$, and write $\mathcal B(X)=\mathcal B(X,X)$. For
$T\in\mathcal B(X,Y)$, its operator norm is denoted by
$\|T\|_{X\to Y}$.
We drop the subscript when the underlying spaces are clear.
We write $\ell_2^m$ for $\mathbb C^m$ equipped with its Euclidean
norm.

The $\ell_\infty$-direct sum of two Banach spaces $X$ and $Y$, denoted by $X\oplus_\infty Y$, is the algebraic direct sum $X\oplus Y$
equipped with the norm
\[
    \|(x,y)\|_{X\oplus_\infty Y}
    :=\max\{\|x\|_X,\|y\|_Y\}.
\]
For $1\leq p<\infty$, their $\ell_p$-direct sum, denoted by
$X\oplus_p Y$, is $X\oplus Y$ equipped with the norm
\[
    \|(x,y)\|_{X\oplus_p Y}
    :=\bigl(\|x\|_X^p+\|y\|_Y^p\bigr)^{1/p}.
\]
Each of these direct sums is a Banach space. We denote $\mathbb{C}^m$ with the $\ell_p$-norm by $\ell_p^m.$

\subsection{Convex sets and extreme points} Throughout, convexity in a complex vector space is understood with
respect to real convex combinations.

\begin{definition}\cite[Chapter 3]{Rudin1991}
Let $K$ be a convex subset of a complex vector
space $V$. A point $x\in K$ is called an \emph{extreme point} of $K$
if, whenever
\[
    x=ty+(1-t)z,
    \qquad y,z\in K,\quad 0<t<1,
\]
we have $y=z=x$. The set of extreme points of $K$ is denoted by
$\operatorname{ext}(K)$.
\end{definition}

The Krein--Milman theorem \cite[Theorem~3.23]{Rudin1991}
states that every nonempty compact convex
subset of a locally convex Hausdroff space is the closed convex hull
of its extreme points. The closed convex hull of a  set $S$ in a locally convex Hausdroff space is denoted by $\overline{\operatorname{conv}}(S).$

The following product formula is recorded for closed convex sets in
\cite[Exercise~II.10 and its solution, p.~41]{KilincKarzanNemirovski2025}.
\begin{fact}\label{fact:extreme-points-product}
Let $K_1$ and $K_2$ be nonempty convex subsets of finite-dimensional
complex vector spaces $V_1$ and $V_2$, respectively. Then
\[
    \operatorname{ext}(K_1\times K_2)
    =
    \operatorname{ext}(K_1)\times\operatorname{ext}(K_2).
\]
\end{fact}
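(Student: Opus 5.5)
We prove the two inclusions $\operatorname{ext}(K_1)\times\operatorname{ext}(K_2)\subseteq\operatorname{ext}(K_1\times K_2)$ and its converse directly from the definition of extreme point, using only that convexity is taken with respect to real convex combinations. The operations on $K_1\times K_2\subseteq V_1\times V_2$ are coordinatewise, so $t(y_1,y_2)+(1-t)(z_1,z_2)=(ty_1+(1-t)z_1,\;ty_2+(1-t)z_2)$. Finite-dimensionality plays no role.

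\emph{Step 1: $\operatorname{ext}(K_1\times K_2)\subseteq\operatorname{ext}(K_1)\times\operatorname{ext}(K_2)$.} Let $(x_1,x_2)$ be extreme in $K_1\times K_2$. Suppose $x_1=ty_1+(1-t)z_1$ with $y_1,z_1\in K_1$ and $0<t<1$. Then
\[
(x_1,x_2)=t(y_1,x_2)+(1-t)(z_1,x_2),
\]
and both points on the right lie in $K_1\times K_2$. Extremality gives $(y_1,x_2)=(z_1,x_2)=(x_1,x_2)$, so $y_1=z_1=x_1$. Hence $x_1\in\operatorname{ext}(K_1)$, and the symmetric argument gives $x_2\in\operatorname{ext}(K_2)$.

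\emph{Step 2: the reverse inclusion.} Let $x_i\in\operatorname{ext}(K_i)$ for $i=1,2$. Suppose
\[
(x_1,x_2)=t(y_1,y_2)+(1-t)(z_1,z_2)
\]
with $(y_1,y_2),(z_1,z_2)\in K_1\times K_2$ and $0<t<1$. Comparing coordinates gives $x_i=ty_i+(1-t)z_i$ with $y_i,z_i\in K_i$. Extremality of each $x_i$ forces $y_i=z_i=x_i$. Therefore $(y_1,y_2)=(z_1,z_2)=(x_1,x_2)$, and $(x_1,x_2)$ is extreme.

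There is no real obstacle here. The only point needing care is the construction in Step 1: we keep one coordinate fixed at $x_2$, and this is legitimate precisely because $x_2\in K_2$. If either $K_i$ has no extreme points, both sides are empty and the argument covers this case automatically.
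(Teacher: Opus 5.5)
Your proof is correct. The one step that needs a moment's thought is the padding $(y_1,x_2),(z_1,x_2)$ in Step 1, and you justify it properly: it is legitimate because $x_2\in K_2$.

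The paper gives no proof of this Fact. It simply cites \cite[Exercise~II.10]{KilincKarzanNemirovski2025}, where, by the paper's own description, the formula is recorded for \emph{closed} convex sets. Your two-inclusion argument shows that neither closedness nor finite-dimensionality plays any role. It therefore justifies the Fact in exactly the generality the paper states it, namely arbitrary nonempty convex subsets, which the cited source does not quite cover.

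In the paper's only application, $K=(\ell_1^2)_1\times\overline{\mathbb D}$ is compact and finite-dimensional. So the citation suffices there, and your argument mainly removes the dependence on an external exercise.
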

\begin{fact}[{\cite[Chapter~V, \S7, Exercise~2, p.~144]{Conway1990}}]\label{fact:extreme-points-ell-one-two}
The extreme points of the closed unit ball of the complex space
$\ell_1^2$ are
\[
  \operatorname{ext}((\ell_1^2)_1)
  =\{(\alpha,0):\alpha\in\mathbb T\}
    \cup\{(0,\alpha):\alpha\in\mathbb T\}.
\]
\end{fact}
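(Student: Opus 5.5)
Write $B$ for the closed unit ball of complex $\ell_1^2$, i.e. $B=\{(z_1,z_2):|z_1|+|z_2|\le 1\}$. The plan is to prove both inclusions directly. For the inclusion $\supseteq$, each listed point has norm one. For the inclusion $\subseteq$, we show that every norm-one point that is not of the listed form is a proper convex combination of two distinct points of $B$.

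\emph{Listed points are extreme.} Suppose $(\alpha,0)=t(y_1,y_2)+(1-t)(z_1,z_2)$ with $y,z\in B$ and $0<t<1$. First, the triangle inequality gives
\[
1=|\alpha|\le t|y_1|+(1-t)|z_1|\le t\|y\|_1+(1-t)\|z\|_1\le 1,
\]
so every inequality is an equality. Equality in the middle step forces $|y_1|=\|y\|_1$ and $|z_1|=\|z\|_1$, hence $y_2=z_2=0$. Equality in the outer steps forces $|y_1|=|z_1|=1$. Second, $\alpha$ is a proper convex combination of $y_1$ and $z_1$, all of which lie on the unit circle. The closed unit disc is strictly convex and $\alpha$ lies on its boundary, so $y_1=z_1=\alpha$. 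The case $(0,\alpha)$ follows by symmetry.

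\emph{No other points are extreme.} A point with $\|x\|_1<1$ is interior to $B$, so it is not extreme. Now take $x=(x_1,x_2)$ with $|x_1|+|x_2|=1$ and both coordinates nonzero. Write $x_j=r_j\omega_j$ with $r_j>0$ and $\omega_j\in\mathbb T$. Then
\[
x=r_1(\omega_1,0)+r_2(0,\omega_2),
\]
and since $r_1+r_2=1$ with $0<r_1<1$, this is a proper convex combination of two distinct points of $B$. Hence $x$ is not extreme. Because $\|x\|_1\le 1$ covers every point of $B$, the three cases are exhaustive and the description of $\operatorname{ext}((\ell_1^2)_1)$ follows.

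The only step needing real care is the strict-convexity argument for the circle in the first part. It is elementary: if $\alpha=ta+(1-t)b$ with $|a|,|b|\le 1$ and $|\alpha|=1$, multiply by $\bar\alpha$ and take real parts. This gives $1=t\,\mathrm{Re}(\bar\alpha a)+(1-t)\,\mathrm{Re}(\bar\alpha b)$. Each real part is at most $1$, so both equal $1$, which forces $a=b=\alpha$.
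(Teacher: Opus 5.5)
Your proof is correct and complete. For extremality of $(\alpha,0)$ and $(0,\alpha)$ you use the equality case of the triangle inequality, which forces $y_2=z_2=0$ and $|y_1|=|z_1|=1$, and then the strict convexity of the closed disc. For every other point, interior points are handled trivially, and the splitting $x=r_1(\omega_1,0)+r_2(0,\omega_2)$ disposes of every other boundary point, so the case analysis is exhaustive.

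The paper gives no argument for this fact and simply cites Conway's exercise, so there is no competing route to compare. What your version adds is self-containedness, at the cost of a few lines.

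You can shorten the first half. On $B$ one has $\operatorname{Re}(\overline{\alpha}z_1)\le|z_1|\le\|z\|_1\le 1$, with equality throughout only at $z=(\alpha,0)$. So $(\alpha,0)$ is the unique maximizer on $B$ of a real-linear functional, i.e.\ an exposed point. That gives extremality at once and merges your two equality arguments, including the separate strict-convexity step, into one.
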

\subsection{Redheffer products}
Let $\mathcal X,\mathcal U,\mathcal Y,\mathcal Z$ and $\mathcal X_1,\mathcal U_1,\mathcal Y_1,\mathcal Z_1$ be Hilbert spaces. 
Suppose
\[
L=
\begin{pmatrix}
A & B\\
C & D
\end{pmatrix},
\qquad
L_1=
\begin{pmatrix}
A_1 & B_1\\
C_1 & D_1
\end{pmatrix},
\]
where
\[
L:\mathcal X\oplus\mathcal U\longrightarrow
\mathcal Y\oplus\mathcal Z,\ \text{and}\ L_1:\mathcal X_1\oplus\mathcal U_1\longrightarrow
\mathcal Y_1\oplus\mathcal Z_1.
\]
Assume moreover that $\mathcal U_1=\mathcal Z,$ $\mathcal X=\mathcal Y_1.$ We make the assumption
\begin{equation}\label{eq:redheffer-invertibility}
\tag{*}
I-B_1C \quad \text{is invertible},
\end{equation}
which implies that \(I-CB_1\) is also invertible. The Redheffer product was studied in \cite{Redheffer1960} (also see \cite[Chapter XIV]{FoiasFrazho1990}). We follow the exposition in \cite[Section 1]{MR1365226} and define Redheffer product by
\begin{equation}
\label{eq:redheffer-definition}
L\circ L_1
=
\begin{pmatrix}
A(I-B_1C)^{-1}A_1
&
B+A(I-B_1C)^{-1}B_1D
\\[2mm]
C_1+D_1C(I-B_1C)^{-1}A_1
&
D_1(I-CB_1)^{-1}D
\end{pmatrix}.
\end{equation}
\begin{prop}
\label{prop:redheffer-basic}
If \(L\) and \(L_1\) are contractions, isometries, coisometries, or
unitaries, then \(L\circ L_1\) is also a contraction, isometry, coisometry, or
unitary respectively.
\end{prop}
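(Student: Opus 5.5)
The approach is to treat $L$ and $L_1$ as coupled systems and eliminate the internal signals; this gives an energy-balance identity from which all four cases follow. Fix $x_1\in\mathcal X_1$ and $d\in\mathcal U$. I want internal vectors $u_1\in\mathcal U_1=\mathcal Z$ and $x\in\mathcal X=\mathcal Y_1$ satisfying the feedback equations
\[
x=A_1x_1+B_1u_1,\qquad u_1=Cx+Dd.
\]
Substituting the second equation into the first gives $(I-B_1C)x=A_1x_1+B_1Dd$. By \eqref{eq:redheffer-invertibility} this has the unique solution
\[
x=(I-B_1C)^{-1}(A_1x_1+B_1Dd).
\]
Using the identity $C(I-B_1C)^{-1}=(I-CB_1)^{-1}C$, one then gets
\[
u_1=(I-CB_1)^{-1}(CA_1x_1+Dd).
\]
Now define the outputs $y=Ax+Bd$ and $z_1=C_1x_1+D_1u_1$. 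A direct expansion, using the push-through identity $(I-B_1C)^{-1}B_1=B_1(I-CB_1)^{-1}$ and $I+B_1(I-CB_1)^{-1}C=(I-B_1C)^{-1}$, should show that $(y,z_1)$ is exactly $(L\circ L_1)$ applied to the input pair formed from $x_1$ and $d$, with the blocks arranged as in \eqref{eq:redheffer-definition}. So the first task is to check this bookkeeping carefully: which input/output spaces pair with which entries. This is routine algebra, but it is where ordering mistakes happen.

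\textbf{Contraction case.} Since $L$ is a contraction,
\[
\|y\|^2+\|u_1\|^2\le \|x\|^2+\|d\|^2.
\]
Since $L_1$ is a contraction,
\[
\|x\|^2+\|z_1\|^2\le\|x_1\|^2+\|u_1\|^2.
\]
Adding these and cancelling $\|x\|^2+\|u_1\|^2$ gives $\|y\|^2+\|z_1\|^2\le\|x_1\|^2+\|d\|^2$, so $L\circ L_1$ is a contraction.

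\textbf{Isometry case.} If both $L$ and $L_1$ are isometries, the two inequalities become equalities, and the same cancellation makes $L\circ L_1$ an isometry.

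\textbf{Coisometry case.} This follows by taking adjoints. The adjoint of a Redheffer product is again a Redheffer product of the adjoints with the roles swapped: $(L\circ L_1)^*=L_1^*\circ L^*$, after suitable relabelling of blocks. The invertibility condition transfers, because $(I-B_1C)^*=I-C^*B_1^*$ is invertible. The adjoints of coisometries are isometries, so the isometry case applies.

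\textbf{Unitary case.} A unitary is both an isometry and a coisometry, so this follows from the previous two cases.

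The main obstacle is the adjoint identity in the coisometry case: verifying that each block of $(L\circ L_1)^*$ matches the Redheffer formula for $L_1^*,L^*$. I would do this by transposing \eqref{eq:redheffer-definition} blockwise and applying the push-through identity once more. As an alternative, one can rerun the energy argument for the adjoint system directly, which avoids the relabelling.
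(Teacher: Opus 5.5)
Your proof is correct. The paper gives no argument of its own here: it records the proposition as a special case of \cite[Lemma~1]{MR1365226}, and only the contraction case is actually used, in Lemma~\ref{lem:redheffer}. Your feedback/energy-balance argument therefore makes the paper self-contained at little cost. For the contraction case it amounts to adding two inequalities.

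The bookkeeping you left as ``should show'' does work out. With $L(x\oplus d)=y\oplus u_1$ and $L_1(x_1\oplus u_1)=x\oplus z_1$, one gets $(L\circ L_1)(x_1\oplus d)=y\oplus z_1$, so $L\circ L_1$ maps $\mathcal X_1\oplus\mathcal U$ to $\mathcal Y\oplus\mathcal Z_1$. The first row is immediate from your formula for $x$. The second row follows from your formula for $u_1$ and $C(I-B_1C)^{-1}=(I-CB_1)^{-1}C$.

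The adjoint step is easier than you feared. In fact $(L\circ L_1)^*=L_1^*\circ L^*$ holds on the nose, with \eqref{eq:redheffer-definition} applied directly to the block matrices of $L_1^*$ and $L^*$. The matching conditions $\mathcal Y_1=\mathcal X$ and $\mathcal Z=\mathcal U_1$ are exactly those needed for this product. Its invertibility condition is $I-C^*B_1^*=(I-B_1C)^*$. Each block then follows from $\bigl((I-B_1C)^{-1}\bigr)^*=(I-C^*B_1^*)^{-1}$ and $\bigl((I-CB_1)^{-1}\bigr)^*=(I-B_1^*C^*)^{-1}$, with no push-through needed.

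What your route adds over the bare citation is the identity
\[
\|y\|^2+\|z_1\|^2-\|x_1\|^2-\|d\|^2=\bigl(\|y\|^2+\|u_1\|^2-\|x\|^2-\|d\|^2\bigr)+\bigl(\|x\|^2+\|z_1\|^2-\|x_1\|^2-\|u_1\|^2\bigr).
\]
It writes the defect of $L\circ L_1$ as the sum of the defects of $L$ and $L_1$ at the internal signals. This handles the contraction and isometry cases in one stroke.
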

This is a special case of \cite[Lemma 1]{MR1365226}. Proposition~\ref{prop:redheffer-basic} is what we only need.

\subsection{Operator spaces}\label{sec:operator-spaces}
\begin{defn}[Abstract operator space]\label{def:abstract-operator-space}
An abstract operator space is a normed linear space $X$
endowed with a matricial norm structure, that is, a family of
normed linear spaces $\{(M_n(X),\|\cdot\|_n):n\geq 1\}$,
satisfying the following conditions:
\begin{itemize}
\item
The norm on $M_1(X)=X$ is the given norm of $X$.
\item
$\|A\oplus B\|_{n+m}=\max\{\|A\|_n,\|B\|_m\}$
for all $A\in M_n(X)$ and $B\in M_m(X)$, where $A\oplus B$
denotes the block diagonal array with diagonal blocks $A$ and $B$.
\item
$\|ACB\|_n
\leq
\|A\|_{\ell_2^m\to\ell_2^n}
\|C\|_m
\|B\|_{\ell_2^n\to\ell_2^m}$
for all $A\in M_{n,m}$, $B\in M_{m,n}$ and $C\in M_m(X)$.
\end{itemize}
\end{defn}

\begin{defn}[Concrete operator space]\label{def:concrete-operator-space}
A concrete operator space is a triple $(X,i,\mathcal H)$ where $X$
is a normed linear space, $\mathcal H$ is a Hilbert
space, and $i:X\to\mathcal B(\mathcal H)$ is a linear isometry.
\end{defn}

Let $(X,i,\mathcal H)$ be a concrete operator space. Note that
$\mathrm{id}_{M_{m,n}}\otimes i:
M_{m,n}(X)\to M_{m,n}(\mathcal B(\mathcal H))$ is a linear map
which acts by applying $i$ to each entry, where
$\mathrm{id}_{M_{m,n}}$ denotes the identity map on $M_{m,n}$.
Moreover, we can identify $M_{m,n}(\mathcal B(\mathcal H))$ with
$\mathcal B(\mathcal H^n,\mathcal H^m)$ as vector spaces, where
$\mathcal H^r$ denotes the Hilbert space direct sum of $r$ copies
of $\mathcal H$. Under this identification, an array
$(T_{ij})\in M_{m,n}(\mathcal B(\mathcal H))$ acts on
$\mathcal H^n$ by
\[
  (\xi_j)_{j=1}^n
  \longmapsto
  \left(\sum_{j=1}^n T_{ij}\xi_j\right)_{i=1}^m
  \in\mathcal H^m.
\]
Then, for $(x_{ij})\in M_{m,n}(X)$, define
\begin{equation}\label{eq:concrete-matrix-norm}
  \|(x_{ij})\|_{m,n}
  :=
  \bigl\|(i(x_{ij}))\bigr\|_{\mathcal H^n\to\mathcal H^m}.
\end{equation}
When $m=n$, we write $\|\cdot\|_n=\|\cdot\|_{n,n}$.
Clearly, the family $\{(M_n(X),\|\cdot\|_n):n\geq 1\}$
obtained from \eqref{eq:concrete-matrix-norm} makes $X$ an
abstract operator space, as the conditions in
Definition~\ref{def:abstract-operator-space} are all easy to check.
The following remarkable theorem of Ruan \cite{Ruan1988}
guarantees the converse.

\begin{thm}[Ruan's theorem]\label{thm:ruan}
Let $X$ be an abstract operator space with a matricial norm
structure $\{(M_n(X),\|\cdot\|_n):n\geq 1\}$. Then there exists
a complex Hilbert space $\mathcal H$ and a linear isometry
$i:X\to\mathcal B(\mathcal H)$ such that
\begin{equation}\label{eq:ruan-representation}
  \|(x_{ij})\|_n
  =
  \bigl\|(\mathrm{id}_{M_n}\otimes i)((x_{ij}))\bigr\|
       _{\mathcal H^n\to\mathcal H^n}
\end{equation}
for every $n\geq 1$ and every $(x_{ij})\in M_n(X)$.
\end{thm}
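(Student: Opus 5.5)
The plan is the standard separation argument. I would reduce the theorem to the following claim: for every $n$ and every $x\in M_n(X)$ with $\|x\|_n=1$, there is a linear map $\varphi:X\to M_n$ that is \emph{completely contractive}, meaning $\|(\varphi(y_{ij}))\|_{M_m(M_n)}\le\|(y_{ij})\|_m$ for all $m$ and $(y_{ij})\in M_m(X)$, and that satisfies $\|(\varphi(x_{ij}))\|=1$. Given the claim, let $\Lambda$ be the set of all such pairs $(n,x)$ with chosen maps $\varphi_{(n,x)}$. Set $\mathcal H=\bigoplus_{\lambda\in\Lambda}\ell_2^{n_\lambda}$ and $i=\bigoplus_\lambda\varphi_\lambda$. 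Each $\mathrm{id}_{M_m}\otimes i$ is contractive because every summand is. The claim gives equality in \eqref{eq:ruan-representation} on unit vectors of every $M_m(X)$, and hence everywhere by homogeneity. The case $n=1$ shows that $i$ is an isometry.

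To prove the claim, fix $x\in M_n(X)$ with $\|x\|_n=1$. By Hahn--Banach, choose a functional $F$ on $M_n(X)$ with $\|F\|=1$ and $F(x)=1$.

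The core step is to upgrade $F$ to a factorization inequality. I want states (positive unit-trace functionals) $f,g$ on $M_n$ such that, for all $m$, all $\alpha\in M_{n,m}$, $\beta\in M_{m,n}$ and $y\in M_m(X)$,
\[
  |F(\alpha y\beta)|\le \|y\|_m\, f(\alpha\alpha^*)^{1/2} g(\beta^*\beta)^{1/2}.
\]
I would argue as follows.
\begin{itemize}
\item The two axioms of Definition~\ref{def:abstract-operator-space} give $|F(\alpha y\beta)|\le\|\alpha\|\,\|y\|_m\,\|\beta\|$.
\item The direct-sum axiom also gives $\|\sum_k\alpha_k y_k\beta_k\|_n\le\|\sum_k\alpha_k\alpha_k^*\|^{1/2}\max_k\|y_k\|\,\|\sum_k\beta_k^*\beta_k\|^{1/2}$. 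To see this, write $\sum_k\alpha_k y_k\beta_k$ as a row times $\bigoplus_k y_k$ times a column.
\item Using this and the AM--GM trick $2\sqrt{ab}\le ta+t^{-1}b$, the functions $\Phi(f,g)=\tfrac12\bigl(f(\alpha\alpha^*)+g(\beta^*\beta)\bigr)-\operatorname{Re}F(\alpha y\beta)$ with $\|y\|\le1$ satisfy $\sup_{f,g}\Phi\ge0$, where $f,g$ range over the compact convex state space of $M_n$. The same bound holds for convex combinations of such functions.
\item A minimax (or Hahn--Banach separation) argument on this compact convex set then yields a single pair $(f,g)$ with $\Phi(f,g)\ge0$ simultaneously for all data.
\item Rescaling $\alpha\mapsto t\alpha$, $\beta\mapsto t^{-1}\beta$ and rotating $y$ by a phase gives the factorization inequality.
\end{itemize}
I expect this minimax step to be the main obstacle. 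It is the step where the two axioms are genuinely combined, and it needs care with the convexity of the family of functions.

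From $(f,g)$, I would do a GNS-type construction. Write $f(a)=\operatorname{tr}(a\rho)$ and $g(b)=\operatorname{tr}(b\sigma)$. On the Hilbert spaces $H_f,H_g$ obtained from $M_{1,n}$ and $M_{n,1}$ with the inner products induced by $f$ and $g$ (each of dimension at most $n$, modulo null vectors), define $\varphi(y)$ as the operator $H_g\to H_f$ determined by the sesquilinear form $\langle\varphi(y)\beta,\alpha^*\rangle=F(\alpha\,y\,\beta)$ for $y\in X$ placed in a $1\times1$ block. The factorization inequality, applied with $m$-block data, shows exactly that $\varphi$ is completely contractive. Evaluating at the $n\times n$ array $x$ with the canonical row and column vectors recovers $F(x)=1$, so $\|(\varphi(x_{ij}))\|\ge1$. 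Finally, embed $H_f,H_g$ into $\ell_2^n$ to view $\varphi$ as a map into $M_n$. This completes the claim and hence the theorem.
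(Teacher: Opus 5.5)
The paper gives no proof of this statement (it is quoted from \cite{Ruan1988}). Your proposal is correct and is essentially the standard proof of Ruan's theorem found in \cite{EffrosRuan2000}: take a Hahn--Banach functional $F$ on $M_n(X)$, use a separation argument to get two states $f,g$ on $M_n$ with $|F(\alpha y\beta)|\le\|y\|_m f(\alpha\alpha^*)^{1/2}g(\beta^*\beta)^{1/2}$, build a complete contraction $X\to M_n$ norming $x$ by a GNS-type construction, and take a direct sum over all such maps.

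The step you flagged goes through as you sketch it. Each function $\Phi_{\alpha,y,\beta}(f,g)=\tfrac12\bigl(f(\alpha\alpha^*)+g(\beta^*\beta)\bigr)-\operatorname{Re}F(\alpha y\beta)$ is affine and continuous on the compact convex set of pairs of states. These functions form a convex cone, since $t\Phi_{\alpha,y,\beta}=\Phi_{\sqrt t\alpha,y,\sqrt t\beta}$ and $\Phi_{\alpha_1,y_1,\beta_1}+\Phi_{\alpha_2,y_2,\beta_2}=\Phi_{\alpha,y,\beta}$ with $\alpha=[\alpha_1\ \alpha_2]$, $y=y_1\oplus y_2$, $\beta=\left(\begin{smallmatrix}\beta_1\\ \beta_2\end{smallmatrix}\right)$. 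So you can separate the compact convex image of $p\mapsto(\Phi_1(p),\dots,\Phi_N(p))$ from the positive orthant of $\mathbb R^N$, and compactness then gives a single pair $(f,g)$ that works for all data simultaneously.
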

\begin{rem}
    In view of Theorem~\ref{thm:ruan}, we shall henceforth make no
distinction between abstract and concrete operator spaces and
simply use the term operator space.
\end{rem}
Note that $\mathcal{B}(\mathcal H)$ is an operator space with the canonical matrical norm structure from the identification $M_n(\mathcal{B}(\mathcal H))=\mathcal{B}({\mathcal H}^n)$. If $X$ and $Y$ are two operator spaces and $u:X\to Y$ is a linear map, we say $u$ is completely bounded (resp. complete contraction) if and only if $\sup\limits_{k\geq 1}\|u\otimes\text{id}_{M_k}\|_{M_k(X)\to M_k(Y)}<\infty$ (resp. $\leq 1$). We denote $u\otimes \text{id}_{M_k}$ by $u_k$ for all $k\geq 1.$

For a normed linear space $(X,\|\cdot\|_X)$, we denote its closed
unit ball by
\[
  (X,\|\cdot\|_X)_1:=\{x\in X:\|x\|_X\leq 1\}.
\]
When the norm is clear, we write $(X)_1$. We denote by $X^*$ the
space of all bounded linear functionals on $X$, equipped with its
usual dual norm.

Let $(X,\|\cdot\|_X)$ be a normed linear space. For every $n\geq 1$ and
$(x_{ij})\in M_n(X)$, we define
\begin{equation}\label{eq:min-matrix-norm}
  \|(x_{ij})\|_{M_n(\operatorname{MIN}(X))}
  :=
  \sup_{f\in(X^*)_1}
  \bigl\|(f(x_{ij}))\bigr\|_{\ell_2^n\to\ell_2^n}.
\end{equation} Then the family $\{(M_n(X),\|\cdot\|_{M_n(\operatorname{MIN}(X))}):n\geq 1\}$ is a matricial norm structure on $(X,\|\cdot\|_X),$ making it an operator space \cite{EffrosRuan2000,Pisier2003}. We denote $X$ endowed with this operator space
structure by $\operatorname{MIN}(X)$. 

The operator space $\operatorname{MAX}(X)$ is defined by equipping
$M_n(X)$, for each $n\geq 1$, with the norm
\begin{equation}\label{eq:max-matrix-norm}
  \|(x_{ij})\|_{M_n(\operatorname{MAX}(X))}
  :=
  \sup\left\{
    \bigl\|(T(x_{ij}))\bigr\|_{\mathcal H^n\to\mathcal H^n}:
    T:X\to\mathcal B(\mathcal H),\ \|T\|\leq 1
  \right\},
\end{equation}
where the supremum is taken over all complex Hilbert spaces
$\mathcal H$ and all contractive linear maps
$T:X\to\mathcal B(\mathcal H)$. These norms satisfy the conditions
in Definition~\ref{def:abstract-operator-space} \cite{EffrosRuan2000,Pisier2003}. We denote $X$ endowed with this operator space
structure by $\operatorname{MAX}(X)$. 

We refer to \cite{EffrosRuan2000,Pisier2003} for further
discussion of the operator spaces $\operatorname{MIN}(X)$ and
$\operatorname{MAX}(X)$.
\subsection{Matricial norms on derivatives}\label{subsec:matricial-derivatives}
Let $\Omega\subseteq\mathbb C^m$ be a compact subset, and let $\operatorname{Int}(\Omega)$ denote the interior of $\Omega$. Fix
$w\in\operatorname{Int}(\Omega)$. For
$f\in\operatorname{Rat}(\Omega)$, set
\[
  Df(w):=
  \left(
    \frac{\partial f}{\partial z_1}(w),\ldots,
    \frac{\partial f}{\partial z_m}(w)
  \right).
\]
We denote by $\mathcal D_{\Omega,w}\subseteq\mathbb C^m$ the
closure of the set
\[
  \left\{
    Df(w):
    f\in\operatorname{Rat}(\Omega),\
    \|f\|_\Omega\leq 1,\ f(w)=0
  \right\}.
\]
We identify $M_k\otimes\mathbb C^m$ with $M_k(\mathbb C^m)$ as mentioned in the beginning of Section \ref{se:prelimandnota}.
If $e_1,\ldots,e_m$ is the canonical basis of $\mathbb C^m$,
then every element of $M_k\otimes\mathbb C^m$ can be written
uniquely as $\sum_{r=1}^m B_r\otimes e_r$, where $B_r\in M_k$.
Thus, we also identify $M_k(\mathbb C^m)$ with the space of
$m$-tuples of $k\times k$ matrices and write
\[
  (B_1,\ldots,B_m)=\sum_{r=1}^m B_r\otimes e_r.
\]
Under these identifications, the $(i,j)$-th entry of the
corresponding array is $\sum_{r=1}^m(B_r)_{ij}e_r\in\mathbb C^m$.

For $F=(f_{ij})\in M_k(\operatorname{Rat}(\Omega))$, set
\[
  DF(w):=
  \left(
    \left(\frac{\partial f_{ij}}{\partial z_1}(w)\right)_{i,j=1}^k,
    \ldots,
    \left(\frac{\partial f_{ij}}{\partial z_m}(w)\right)_{i,j=1}^k
  \right).
\]
Thus, $DF(w)$ is an element of $M_k(\mathbb C^m)$, which may
also be written as $(Df_{ij}(w))_{i,j=1}^k$. We denote by
$\mathcal D_{\Omega,w}^{(k)}\subseteq M_k(\mathbb C^m)$ the
closure of the set
\[
  \left\{
    DF(w):
    F\in M_k(\operatorname{Rat}(\Omega)),\
    \|F\|_{M_k(\operatorname{Rat}(\Omega))}\leq 1,\ F(w)=0
  \right\},
\]
where $\|F\|_{M_k(\operatorname{Rat}(\Omega))}:=
  \sup_{z\in\Omega}\|F(z)\|_{\ell_2^k\to\ell_2^k}.$
In particular, $\mathcal D_{\Omega,w}^{(1)}=\mathcal D_{\Omega,w}$.
The following theorem is a paraphrased version of
\cite[Proposition~3.2]{Paulsen1992}.

\begin{thm}\label{thm:cotangent-matrix-norms}
For every $k\geq 1$, there exists a norm
$\|\cdot\|_{\mathcal D_{\Omega,w}^{(k)}}$ on
$M_k(\mathbb C^m)$ whose closed unit ball is
$\mathcal D_{\Omega,w}^{(k)}$. Thus, we have
\[
  \mathcal D_{\Omega,w}^{(k)}
  =
  \left(
    M_k(\mathbb C^m),
    \|\cdot\|_{\mathcal D_{\Omega,w}^{(k)}}
  \right)_1.
\]
Moreover, the family
$\{(M_k(\mathbb C^m),
\|\cdot\|_{\mathcal D_{\Omega,w}^{(k)}}):k\geq 1\}$
is a matricial norm structure making
$(\mathbb C^m,\|\cdot\|_{\mathcal D_{\Omega,w}^{(1)}})$
an abstract operator space. 
\end{thm}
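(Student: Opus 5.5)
We define the candidate norm as the Minkowski gauge of $\mathcal D^{(k)}_{\Omega,w}$:
\[
\|X\|_{\mathcal D^{(k)}_{\Omega,w}}:=\inf\{t>0: X\in t\,\mathcal D^{(k)}_{\Omega,w}\}.
\]
To show that this is a norm whose closed unit ball is exactly $\mathcal D^{(k)}_{\Omega,w}$, it suffices to check that $\mathcal D^{(k)}_{\Omega,w}$ is closed, convex, balanced (invariant under $\mathbb T$), bounded, and absorbing (contains a neighbourhood of $0$).

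\emph{Closedness and balancedness.} Closedness holds by definition. For balancedness, $F\mapsto \lambda F$ with $|\lambda|\le 1$ preserves the constraints $\|F\|\le1$, $F(w)=0$, and $D(\lambda F)(w)=\lambda DF(w)$.

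\emph{Convexity.} Real convex combinations $tF+(1-t)G$ preserve the constraints, and $D$ is linear.

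\emph{Boundedness.} Choose a polydisc $w+r\overline{\mathbb D}^m\subseteq\Omega$. For each unit vectors $\xi,\eta$, the scalar function $\langle F(\cdot)\xi,\eta\rangle$ restricted to a one-variable slice through $w$ is bounded by $1$ and vanishes at $w$. Cauchy estimates (or Schwarz's lemma) then bound each entry of $\partial F/\partial z_r(w)$ by $1/r$.

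\emph{Absorbing.} For $B_r\in M_k$ with small norms, take
\[
F(z)=\sum_r (z_r-w_r)B_r .
\]
This is a polynomial matrix with $F(w)=0$ and $DF(w)=(B_1,\dots,B_m)$. Since $\Omega$ is compact, $\sup_\Omega\|F(z)\|\le \sum_r\|B_r\|\,\operatorname{diam}$ is at most $1$ once the $B_r$ are small enough.

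\emph{Ruan's axioms.} We then verify the conditions of Definition~\ref{def:abstract-operator-space} at the level of unit balls.

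For $k=1$ the norm is by definition the given one.

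For the scaling axiom: if $\alpha\in M_{n,m}$ and $\beta\in M_{m,n}$ are contractions and $X=DF(w)$ with $F\in M_m(\operatorname{Rat}(\Omega))$ admissible, then $\alpha F\beta\in M_n(\operatorname{Rat}(\Omega))$ is admissible and $D(\alpha F\beta)(w)=\alpha X\beta$. Passing to closures and using homogeneity gives $\|\alpha X\beta\|\le\|\alpha\|\|X\|\|\beta\|$.

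For the direct sum axiom, we need two inequalities.
\begin{itemize}
\item The bound $\|X\oplus Y\|\le\max$ follows by taking $F\oplus G$, which is admissible whenever $F$ and $G$ are, since $\|F(z)\oplus G(z)\|=\max\{\|F(z)\|,\|G(z)\|\}$.
\item The reverse bound follows from the scaling axiom by compressing with the coordinate isometries.
\end{itemize}
Finally, nondegeneracy (norm zero implies $X=0$) comes from boundedness of the unit ball.

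\emph{Main obstacle.} The only step with genuine analytic content is boundedness of $\mathcal D^{(k)}_{\Omega,w}$, which relies on $w$ being an interior point. The rest is formal. Note that the hypothesis $F(w)=0$ is not actually needed for the axioms, but it is harmless since every construction above preserves it.
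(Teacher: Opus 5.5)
Your proof is correct and follows the standard route. The paper gives no argument of its own here beyond citing Paulsen's Proposition~3.2, and the verification behind that result is the one you give: $\mathcal D^{(k)}_{\Omega,w}$ is closed, absolutely convex, bounded (Cauchy/Schwarz estimates on a polydisc about the interior point $w$) and absorbing (the linear polynomials $\sum_r (z_r-w_r)B_r$), so its Minkowski gauge is a norm with exactly this closed unit ball; Ruan's axioms then reduce to $\alpha\,\mathcal D^{(m)}_{\Omega,w}\,\beta\subseteq\mathcal D^{(n)}_{\Omega,w}$ for contractions $\alpha,\beta$ and $\mathcal D^{(n)}_{\Omega,w}\oplus\mathcal D^{(m)}_{\Omega,w}\subseteq\mathcal D^{(n+m)}_{\Omega,w}$, which you obtain from the admissibility of $\alpha F\beta$ and $F\oplus G$ together with continuity when passing to closures.
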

Throughout we denote $\|\cdot\|_{\mathcal D_{\Omega,w}^{(1)}}$ by $\|\cdot\|_{\mathcal D_{\Omega,w}}.$
\begin{defn}[Cotangent operator space]\label{def:cotangent-operator-space}
The normed space $(\mathbb C^m,\|\cdot\|_{\mathcal D_{\Omega,w}})$,
equipped with the matricial norm structure given in
Theorem~\ref{thm:cotangent-matrix-norms}, is called the cotangent
operator space to $\Omega$ at $w$ and is denoted by
$\operatorname{COT}_w(\Omega)$.
\end{defn}
We refer to \cite{Paulsen1992} for more on this topic.
\section{Computation for operator space structure for \texorpdfstring{$\operatorname{COT}_0(\overline{\mathbb E})$}{COT at the origin of the closed tetrablock}}\label{sec:tetrablock-cotangent}
  Throughout this section we denote $\overline{\mathbb E}$ by $\Omega.$ The following rational functions play a crucial role in characterization of tetrablock \cite[Definition~2.1]{AWY2007}. For $z\in \mathbb{C}$ and $x= (x_1, x_2, x_3)\in \mathbb{C}^3$ we define
	\begin{align}
		\Psi(z,x)&= \frac{x_3z-x_1}{x_2z-1}\\
		\Upsilon(z,x)&= \Psi(z,x_2,x_1,x_3)=\frac{x_3z-x_2}{x_1z-1}.
	\end{align}
     By
\cite[Theorem~2.4, conditions~$(2)$ and~$(2')$]{AWY2007},
\begin{equation}\label{eq:tetrablock-test-function-bounds}
  |\Psi(z,x)|\leq1,
  \qquad |\Upsilon(z,x)|\leq1
  \qquad (x\in\Omega,\ z\in\mathbb D).
\end{equation}
Several algebraic characterizations of tetrablock are known; see \cite[Theorem 2.2 and Theorem 2.4]{AWY2007}. We use the following two facts about the tetrablock. First,
\begin{equation}\label{eq:tetrablock-coordinate-bounds}
  \mathbb E\subset\mathbb D^3,\qquad
  \Omega\subset\overline{\mathbb D}^{\,3};
\end{equation}
see the characterizations in
\cite[Theorems~2.2 and~2.4]{AWY2007}. Second,
\begin{equation}\label{eq:dist-boundary}
b\mathbb E
=
\{(\omega\overline{z},z,\omega):
z\in\overline{\mathbb D},\ \omega\in\mathbb T\},
\end{equation}
is the distinguished boundary \cite[Theorem~7.1]{AWY2007}. In particular, every scalar function continuous
on $\Omega$ and holomorphic on $\mathbb E$ has its supremum norm on $b\mathbb E$.

\subsection{Computing \texorpdfstring{$\mathcal D_{\Omega,0}$}{D at the origin}}\label{subsec:scalar-tetrablock-derivatives}

\begin{prop}\label{prop:scalar-tetrablock-derivatives}
For $\Omega=\overline{\mathbb E}$, we have
\begin{equation}\label{eq:scalar-tetrablock-derivative-ball}
  \mathcal D_{\Omega,0}
  =K:=\left\{(c_1,c_2,c_3)\in\mathbb C^3:
       |c_1|+|c_2|\leq1,\quad |c_3|\leq1\right\}.
\end{equation}

\end{prop}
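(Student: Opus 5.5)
We prove the two inclusions $K\subseteq\mathcal D_{\Omega,0}$ and $\mathcal D_{\Omega,0}\subseteq K$ separately. The second uses one-variable Schwarz lemma arguments along holomorphic discs in $\Omega$ through the origin. The first uses the test functions $\Psi,\Upsilon$ together with an extreme-point argument.

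\emph{Upper bound $\mathcal D_{\Omega,0}\subseteq K$.} Let $f\in\operatorname{Rat}(\Omega)$ with $\|f\|_\Omega\le 1$, $f(0)=0$, and write $Df(0)=(c_1,c_2,c_3)$.
\begin{itemize}
\item For $|x_1|,|x_2|\le 1$ we have
\[
1-x_1z-x_2w+x_1x_2zw=(1-x_1z)(1-x_2w).
\]
If $|x_1|,|x_2|<1$, this does not vanish for $z,w\in\overline{\mathbb D}$. Hence $(x_1,x_2,x_1x_2)\in\mathbb E$, and by closure this holds for $|x_1|,|x_2|\le1$.
\item Fix $\alpha,\beta\in\mathbb T$ and set $g(\lambda)=f(\alpha\lambda,\beta\lambda,\alpha\beta\lambda^2)$ for $\lambda\in\overline{\mathbb D}$. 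Then $g$ is holomorphic near $\overline{\mathbb D}$, $|g|\le1$ and $g(0)=0$. The Schwarz lemma gives $|g'(0)|=|\alpha c_1+\beta c_2|\le1$.
\item Choosing $\alpha,\beta$ to align the phases of $c_1,c_2$ yields $|c_1|+|c_2|\le1$.
\item Similarly, $(0,0,\lambda)\in\mathbb E$ for $|\lambda|<1$, since $1+\lambda zw\neq0$. Applying the Schwarz lemma to $\lambda\mapsto f(0,0,\lambda)$ gives $|c_3|\le1$.
\end{itemize}
Since $K$ is closed, passing to the closure gives $\mathcal D_{\Omega,0}\subseteq K$.

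\emph{Lower bound $K\subseteq\mathcal D_{\Omega,0}$.} By Theorem~\ref{thm:cotangent-matrix-norms}, $\mathcal D_{\Omega,0}$ is the closed unit ball of a norm, hence closed and convex. Since $K$ is compact and convex, the Krein--Milman theorem reduces the task to showing $\operatorname{ext}(K)\subseteq\mathcal D_{\Omega,0}$. By Fact~\ref{fact:extreme-points-product} and Fact~\ref{fact:extreme-points-ell-one-two}, $K=(\ell_1^2)_1\times\overline{\mathbb D}$ has extreme points $(\alpha,0,\beta)$ and $(0,\alpha,\beta)$ with $\alpha,\beta\in\mathbb T$.

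For fixed $z\in\mathbb D$, consider
\[
\Psi(z,\cdot)=\frac{x_1-x_3z}{1-x_2z}.
\]
Its denominator does not vanish on $\Omega\subset\overline{\mathbb D}^3$ by \eqref{eq:tetrablock-coordinate-bounds}, so $\Psi(z,\cdot)\in\operatorname{Rat}(\Omega)$. By \eqref{eq:tetrablock-test-function-bounds} it satisfies $\|\Psi(z,\cdot)\|_\Omega\le1$, and $\Psi(z,0)=0$. A direct computation gives $D\Psi(z,\cdot)(0)=(1,0,-z)$.

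Now take $f=\alpha\Psi(z,\cdot)$ with $z=-r\bar\alpha\beta$ and $0<r<1$. Then $Df(0)=(\alpha,0,r\beta)$, and letting $r\to1$ puts $(\alpha,0,\beta)$ in the closure $\mathcal D_{\Omega,0}$. The symmetric function $\Upsilon$ produces $(0,\alpha,\beta)$ in the same way.

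I expect no serious obstacle. The only points needing care are the membership claims: that the discs used in the upper bound lie in $\Omega$, and that $\Psi(z,\cdot)$ is genuinely in $\operatorname{Rat}(\Omega)$, i.e.\ its denominator is nonvanishing on all of $\Omega$. Both follow from \eqref{eq:tetrablock-coordinate-bounds} and the defining condition of $\mathbb E$.
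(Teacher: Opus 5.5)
Your proof is correct and is essentially the paper's argument. For $\mathcal D_{\Omega,0}\subseteq K$ you use the same analytic discs $\lambda\mapsto(\alpha\lambda,\beta\lambda,\alpha\beta\lambda^2)$ and $\lambda\mapsto(0,0,\lambda)$ with the Schwarz lemma. For the reverse inclusion you use the same test functions $\Psi,\Upsilon$ with parameter in $\mathbb D$, together with the extreme-point description of $K$. The only cosmetic difference is that you build the unimodular rotation into the function, $f=\alpha\Psi(-r\bar\alpha\beta,\cdot)$, whereas the paper obtains $(1,0,-r\alpha)$ and then uses that $\mathcal D_{\Omega,0}$ is closed and balanced.
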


\begin{proof}
We first prove $\mathcal D_{\Omega,0}\subseteq K$.
Let $f\in\operatorname{Rat}(\Omega)$ satisfy
$f(0)=0$ and $\|f\|_\Omega\leq1$, and write
$Df(0)=(c_1,c_2,c_3)$. For $u,v\in\mathbb T$, consider
\[
  h_{u,v}(\lambda)=(u\lambda,v\lambda,uv\lambda^2),
  \qquad \lambda\in\mathbb D.
\]
This map takes $\mathbb D$ into $\mathbb E$, since
\[
  1-u\lambda z-v\lambda w+uv\lambda^2zw
  =(1-u\lambda z)(1-v\lambda w)\neq0
\]
whenever $z,w\in\overline{\mathbb D}$. The usual Schwarz lemma
on $\mathbb D$, applied to $f\circ h_{u,v}$, gives
\[
  |uc_1+vc_2|=|(f\circ h_{u,v})'(0)|\leq1.
\]
Taking the supremum over $u,v\in\mathbb T$, we obtain
$|c_1|+|c_2|\leq1$. Similarly, the map
$h_3(\lambda)=(0,0,\lambda)$ takes $\mathbb D$ into
$\mathbb E$, because $|\lambda zw|<1$ for
$z,w\in\overline{\mathbb D}$. Applying again the Schwarz lemma
to $f\circ h_3$ yields $|c_3|\leq1$. This proves the desired inclusion.

We next show that
$\operatorname{ext}(K)\subseteq\mathcal D_{\Omega,0}$. 
Fix $0<r<1$, and define for $\alpha,\beta\in\mathbb T$
\begin{align*}
  \Psi_r(\alpha,x)
  &:=\frac{x_3r\alpha-x_1}{x_2r\alpha-1},\\
  \Upsilon_r(\beta,x)
  &:=\frac{x_3r\beta-x_2}{x_1r\beta-1}.
\end{align*}
Clearly for all $0<r<1$ and $\alpha,\beta\in\mathbb T$ we have 
$\Psi_r(\alpha,\cdot),\ \Upsilon_r(\beta,\cdot)\in \operatorname{Rat}(\Omega)$. Moreover, by
\eqref{eq:tetrablock-test-function-bounds}, we have that $\|\Psi_r(\alpha,\cdot)\|_{\Omega}\leq 1$ and $\|\Upsilon_r(\beta,\cdot)\|_{\Omega}\leq 1.$
Both $\Psi_r(\alpha,\cdot)$ and $\Upsilon_r(\beta,\cdot)$ vanish at $0$.
We compute
\[
  D\Psi_r(\alpha,\cdot)(0)=(1,0,-r\alpha),
  \qquad
  D\Upsilon_{r}(\beta,\cdot)(0)=(0,1,-r\beta).
\]
As
$\mathcal D_{\Omega,0}$ is a closed unit ball (Theorem~\ref{thm:cotangent-matrix-norms}) and also balanced we obtain from above
\begin{equation}\label{eq:tetrablock-extreme-gradients}
  (\beta,0,\alpha)\in \mathcal D_{\Omega,0} \ \text{and}\ (0,\alpha,\beta)\in \mathcal D_{\Omega,0},\ \text{for all}\ \alpha,\beta\in\mathbb T.
\end{equation}

Note that by Fact~\ref{fact:extreme-points-product} $\operatorname{ext}(K)=\operatorname{ext}((\ell_1^2)_1)\times \mathbb T.$ Hence from Fact~\ref{fact:extreme-points-ell-one-two} and \eqref{eq:tetrablock-extreme-gradients} we see that $\operatorname{ext}(K)\subseteq \mathcal D_{\Omega,0}$, implying $K\subseteq\mathcal D_{\Omega,0}$ as $\mathcal D_{\Omega,0}$ is  a closed convex set.
Together with the first inclusion, this proves
\eqref{eq:scalar-tetrablock-derivative-ball}.
\end{proof}
\begin{rem}\label{COTisdirect}
It follows from Proposition~\ref{prop:scalar-tetrablock-derivatives} that the underlying Banach space of
$\operatorname{COT}_0(\Omega)$ is
$\ell_1^2\oplus_\infty\mathbb C$ under the identification
$(c_1,c_2,c_3)\mapsto((c_1,c_2),c_3)$.
\end{rem}
\subsection{Computing \texorpdfstring{$\mathcal{D}_{\Omega,0}^{(k)}$}{D at matrix level k}:}
\begin{lem}\label{lem:pencil}
Let $B_1,B_2\in M_k$ satisfy $\sup\limits_{\lambda\in\mathbb T}\|B_1+\lambda B_2\|\leq1.$
Then we have
\begin{equation}\label{eq:pencil}
  \|aB_1+bB_2\|\leq\max\{|a|,|b|\}
  \qquad\text{for all}\ a,b\in\mathbb C.
\end{equation}
\end{lem}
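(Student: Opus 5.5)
By homogeneity, it suffices to show $\|aB_1+bB_2\|\leq 1$ whenever $\max\{|a|,|b|\}\leq 1$. The plan is to view $\lambda\mapsto B_1+\lambda B_2$ as a matrix-valued affine function bounded by $1$ on $\mathbb T$, and then use the maximum principle together with convexity of the norm.

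First, the maximum modulus principle for the $M_k$-valued polynomial $\lambda\mapsto B_1+\lambda B_2$ gives $\|B_1+\lambda B_2\|\leq 1$ for all $|\lambda|\leq 1$. Concretely, for unit vectors $\xi,\eta$ the scalar polynomial $\langle (B_1+\lambda B_2)\xi,\eta\rangle$ is bounded by $1$ on $\mathbb T$, hence on $\overline{\mathbb D}$; taking the supremum over $\xi,\eta$ yields the claim.

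Next, handle the case $|a|\geq|b|$ with $a\neq 0$. Write $aB_1+bB_2=a(B_1+(b/a)B_2)$ with $|b/a|\leq 1$. By the first step,
\[
\|aB_1+bB_2\|\leq |a|=\max\{|a|,|b|\}.
\]

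The case $|b|>|a|$ is the only non-immediate one, because we need to bound $\|\mu B_1+B_2\|$ for $|\mu|\leq 1$, which does not directly take the form $B_1+\lambda B_2$. The plan is to use a symmetry. For $\lambda\in\mathbb T$ we have $\bar\lambda(B_1+\lambda B_2)=\bar\lambda B_1+B_2$, so $\|\bar\lambda B_1+B_2\|\leq 1$ for every $\lambda\in\mathbb T$. Hence the function $\mu\mapsto\mu B_1+B_2$ is also bounded by $1$ on $\mathbb T$. Applying the maximum principle of the first step with the roles of $B_1$ and $B_2$ swapped gives $\|\mu B_1+B_2\|\leq 1$ for $|\mu|\leq 1$. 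Writing $aB_1+bB_2=b((a/b)B_1+B_2)$ then gives the bound $|b|$.

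Finally, the degenerate case $a=b=0$ is trivial. Alternatively, one can observe that $\{(a,b):\|aB_1+bB_2\|\leq 1\}$ is convex and contains $\mathbb T\times\mathbb T$, and $\overline{\mathbb D}^2$ is the closed convex hull of $\mathbb T^2$ by Fact~\ref{fact:extreme-points-product}. This gives a one-line alternative that avoids the case split entirely.
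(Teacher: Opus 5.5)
Your proof is correct and is essentially the paper's argument. The paper uses the rotation $\lambda=\overline{a}b$ to get the bound on $\mathbb T^2$ and then applies the maximum modulus principle in each variable on $\overline{\mathbb D}^{\,2}$. You instead use the one-variable maximum principle and dehomogenize by whichever of $|a|,|b|$ is larger, and your identity $\overline{\lambda}(B_1+\lambda B_2)=\overline{\lambda}B_1+B_2$ plays the role of that rotation.

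Your convexity alternative is also valid and avoids holomorphy entirely. It needs Krein--Milman and $\operatorname{ext}(\overline{\mathbb D})=\mathbb T$ in addition to Fact~\ref{fact:extreme-points-product}, or simply the observation $\operatorname{conv}(\mathbb T)=\overline{\mathbb D}$.
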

\begin{proof}
If either $B_1$ or $B_2$ is zero, the conclusion is immediate.
Assume that both are nonzero. For $|a|=|b|=1$, taking
$\lambda=\overline a b$ gives
\[
  \|aB_1+bB_2\|
  =\|B_1+\overline a bB_2\|\leq1.
\]
Fix unit vectors $\xi,\eta\in\mathbb C^k$. The scalar polynomial
\[
  (a,b)\longmapsto\langle(aB_1+bB_2)\xi,\eta\rangle
\]
is therefore bounded in modulus by $1$ on $\mathbb T^2$.
Applying the maximum-modulus principle in each variable gives
the same bound on $\overline{\mathbb D}^{\,2}$. Taking the supremum
over $\xi,\eta$, we obtain $\|aB_1+bB_2\|\leq1$ whenever
$|a|,|b|\leq1$. Scaling by $\max\{|a|,|b|\}$ proves
\eqref{eq:pencil}; the case $a=b=0$ is immediate.
\end{proof}
\begin{lem}\label{lem:pencil-scalarization}
Let $B_1,B_2\in M_k$. Then
\[
\sup_{\lambda\in\mathbb T}\|B_1+\lambda B_2\|
=
\sup_{\|\xi\|_2=\|\eta\|_2=1}
\bigl(
|\langle B_1\xi,\eta\rangle|
+
|\langle B_2\xi,\eta\rangle|
\bigr).
\]
\end{lem}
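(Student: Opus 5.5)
The plan is to express both sides as a supremum over unit vectors $\xi,\eta\in\mathbb C^k$ and a unimodular $\lambda$, and then swap the order of the two suprema. For any operator $T$ on $\mathbb C^k$ we have $\|T\|=\sup_{\|\xi\|_2=\|\eta\|_2=1}|\langle T\xi,\eta\rangle|$. Hence
\[
\sup_{\lambda\in\mathbb T}\|B_1+\lambda B_2\|
=\sup_{\|\xi\|_2=\|\eta\|_2=1}\ \sup_{\lambda\in\mathbb T}
\bigl|\langle B_1\xi,\eta\rangle+\lambda\langle B_2\xi,\eta\rangle\bigr|.
\]
Interchanging the two suprema is legitimate because both are suprema of the same family of nonnegative reals indexed by $(\lambda,\xi,\eta)$.

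It then remains to evaluate the inner supremum over $\lambda$ for fixed $\xi,\eta$. Write $p=\langle B_1\xi,\eta\rangle$ and $q=\langle B_2\xi,\eta\rangle$. We need the elementary identity
\[
\sup_{\lambda\in\mathbb T}|p+\lambda q|=|p|+|q|.
\]
The inequality $\le$ is the triangle inequality. For $\ge$, take $\lambda=\operatorname{sgn}(p)\overline{\operatorname{sgn}(q)}$ when $p,q\neq0$, which aligns the phases so that $|p+\lambda q|=|p|+|q|$. If $p=0$ or $q=0$, any $\lambda$ works. Substituting this identity into the display gives exactly the right-hand side of the lemma.

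There is no real obstacle here. The only points needing care are justifying the interchange of suprema, which is trivial since both sides are suprema over a product index set, and handling the degenerate phase choice when $p$ or $q$ vanishes. The whole argument is finite-dimensional and uses nothing beyond the variational formula for the operator norm.
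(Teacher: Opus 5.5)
Your proposal is correct and follows essentially the same route as the paper: write the operator norm as $\sup_{\|\xi\|_2=\|\eta\|_2=1}|\langle T\xi,\eta\rangle|$, interchange the suprema over $\lambda$ and $(\xi,\eta)$, and use the scalar identity $\sup_{\lambda\in\mathbb T}|a+\lambda b|=|a|+|b|$. Your explicit phase choice for that identity just spells out a step the paper states without proof.
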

\begin{proof}
For any $a,b\in\mathbb C$, we have
$|a|+|b|=\sup_{\lambda\in\mathbb T}|a+\lambda b|$.
Hence
\begin{align*}
\sup_{\lambda\in\mathbb T}\|B_1+\lambda B_2\|
&=\sup_{\|\xi\|_2=\|\eta\|_2=1}
  \sup_{\lambda\in\mathbb T}
  |\langle(B_1+\lambda B_2)\xi,\eta\rangle|\\
&=\sup_{\|\xi\|_2=\|\eta\|_2=1}
  \bigl(
  |\langle B_1\xi,\eta\rangle|
  +|\langle B_2\xi,\eta\rangle|
  \bigr).
\end{align*}
\end{proof}
\begin{lem}\label{lem:redheffer}
Suppose $P,Q\in M_k$, $0\le r\le1$, $\|P\|\le r$, and $\|Q\|<1$. Then
\begin{equation}\label{eq:redheffer-estimate}
\left\|P+(1-r^2)Q(I+P^*Q)^{-1}\right\|\le1.
\end{equation}
\end{lem}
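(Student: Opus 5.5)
The plan is to realize the operator in \eqref{eq:redheffer-estimate} as an entry of a Redheffer product of two contractions, and then to apply Proposition~\ref{prop:redheffer-basic}. Every entry of a contraction is itself a contraction, so this gives the bound. All Hilbert spaces involved will be $\mathbb C^k$. Put $s:=\sqrt{1-r^2}\in[0,1]$. Recall that the $(1,2)$ entry of $L\circ L_1$ in \eqref{eq:redheffer-definition} is $B+A(I-B_1C)^{-1}B_1D$. We therefore choose
\[
L=\begin{pmatrix} sI & P\\ -P^* & sI\end{pmatrix},\qquad
L_1=\begin{pmatrix} 0 & Q\\ 0 & 0\end{pmatrix},
\]
so that $A=D=sI$, $B=P$, $C=-P^*$, and $B_1=Q$.

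\emph{Step 1: $L$ is a contraction.} Write $L=sI+S$ with
\[
S=\begin{pmatrix}0&P\\-P^*&0\end{pmatrix}.
\]
Then $S^*=-S$ and $\|S\|=\|P\|\le r$. Hence $L$ is normal. Its eigenvalues have the form $s+it$ with $t\in\mathbb R$ and $|t|\le r$, so their moduli are at most $\sqrt{s^2+r^2}=1$. For a normal matrix the norm equals the spectral radius, so $\|L\|\le1$. The matrix $L_1$ is clearly a contraction, since $\|L_1\|=\|Q\|<1$.

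\emph{Step 2: the invertibility hypothesis \eqref{eq:redheffer-invertibility}.} Here $I-B_1C=I+QP^*$. Since $\|QP^*\|\le\|Q\|\,\|P\|<1$, this matrix is invertible by a Neumann series argument.

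\emph{Step 3: identify the $(1,2)$ entry.} The $(1,2)$ entry of $L\circ L_1$ equals
\[
P+s(I+QP^*)^{-1}Qs=P+(1-r^2)(I+QP^*)^{-1}Q.
\]
The push-through identity $(I+QP^*)^{-1}Q=Q(I+P^*Q)^{-1}$ follows from $Q(I+P^*Q)=(I+QP^*)Q$. It also shows that $I+P^*Q$ is invertible, which can alternatively be seen directly since $\|P^*Q\|<1$. The $(1,2)$ entry is therefore exactly the matrix appearing in \eqref{eq:redheffer-estimate}.

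By Proposition~\ref{prop:redheffer-basic}, $L\circ L_1$ is a contraction, and since its $(1,2)$ entry is a compression of it, that entry has norm at most $1$, which is \eqref{eq:redheffer-estimate}. The only step requiring an idea is the choice of $L$: the naive Julia-type unitary built from the defect operators of $P$ does not produce the scalar factor $1-r^2$, whereas the normal matrix $sI+S$ does, and its contractivity comes directly from the skew-adjointness of $S$.
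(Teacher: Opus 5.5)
Your proposal is correct and follows essentially the paper's proof: the same $L$ and $L_1$, the same invertibility check, the same identification of the $(1,2)$ entry of $L\circ L_1$ via the push-through identity, and then Proposition~\ref{prop:redheffer-basic}. The only difference is cosmetic: you get $\|L\|\le 1$ from normality of $sI+S$ with $S$ skew-adjoint, whereas the paper computes $L^*L=\operatorname{diag}(\delta I+PP^*,\ \delta I+P^*P)\le I$ directly.
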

\begin{proof}
Put $\delta=1-r^2$. Let us define
\begin{equation}\label{eq:redheffer-matrices}
L=\begin{pmatrix}\sqrt\delta\,I&P\\-P^*&\sqrt\delta\,I\end{pmatrix},
\qquad L_1=\begin{pmatrix}0&Q\\0&0\end{pmatrix}.
\end{equation}
They are contractions. Indeed,
\begin{equation}\label{eq:L-star-L}
L^*L=\begin{pmatrix}\delta I+PP^*&0\\0&\delta I+P^*P\end{pmatrix}\le I,
\qquad \|L_1\|=\|Q\|<1.
\end{equation}
Here in the first inequality in above we use $PP^*\le r^2I$ and $P^*P\le r^2I$. Note that $I+QP^*$ is invertible as $\|QP^*\|\leq r\|Q\|<1$ which satisfies the 
invertibility condition \eqref{eq:redheffer-invertibility}.

In \eqref{eq:redheffer-definition} the $(1,2)$ th position of the Redheffer product $L\circ L_1$ is
\begin{align}
&P+\sqrt\delta\,I\,(I+QP^*)^{-1}Q\sqrt\delta\,I\notag\\
&=P+\delta(I+QP^*)^{-1}Q\notag\\
&=P+(1-r^2) Q(I+P^*Q)^{-1}.
\end{align}
Therefore, as both $L$ and $L_1$ are contractions by Proposition~\ref{prop:redheffer-basic} we obtain $P+(1-r^2)Q(I+P^*Q)^{-1}$ is a contraction. This completes the proof.
\end{proof}
\begin{thm}\label{thm:main}
Let $k\ge1$ and let $B_1,B_2,B_3\in M_k$ satisfy
\begin{equation}\label{eq:data}
\sup_{\lambda\in\mathbb T}\|B_1+\lambda B_2\|\le1,\qquad\|B_3\|\le1.
\end{equation}
Define
\begin{equation}\label{eq:definitions}
\begin{aligned}
L(x)&=x_1B_1+x_2B_2,\\
S(x)&=x_2B_1^*+x_1B_2^*,\\
\Delta(x)&=x_3-x_1x_2.
\end{aligned}
\end{equation}
Then for all $0<\rho<1$, the function
\begin{equation}\label{eq:formula}
F_\rho(x)=L(x)+\rho\Delta(x)B_3\bigl(I+\rho S(x)B_3\bigr)^{-1}
\end{equation}
belongs to $M_k(\operatorname{Rat}(\Omega))$ and satisfies
\begin{equation}\label{eq:interpolation}
\|F_\rho\|_\Omega\le1,\qquad F_\rho(0)=0,\qquad
DF_\rho(0)=(B_1,B_2,\rho B_3).
\end{equation}
\end{thm}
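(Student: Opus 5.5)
The plan is to check membership in $M_k(\operatorname{Rat}(\Omega))$ and the values at the origin directly, and to obtain the norm bound by reducing to the distinguished boundary $b\mathbb E$ and invoking Lemma~\ref{lem:redheffer} there.

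\emph{Step 1: $F_\rho\in M_k(\operatorname{Rat}(\Omega))$.} Since $\|B_1^*+\lambda B_2^*\|=\|B_1+\overline\lambda B_2\|$, the pair $(B_1^*,B_2^*)$ also satisfies the hypothesis of Lemma~\ref{lem:pencil}. That lemma therefore gives
\[
\|S(x)\|\le\max\{|x_1|,|x_2|\}\le1
\]
for $x\in\Omega$, using \eqref{eq:tetrablock-coordinate-bounds}. Hence $\|\rho S(x)B_3\|\le\rho<1$, so the polynomial $\det(I+\rho S(x)B_3)$ does not vanish on $\Omega$. By Cramer's rule every entry of $F_\rho$ is then a quotient of polynomials whose denominator is nonzero on $\Omega$.

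\emph{Step 2: values at the origin.} Clearly $L(0)=0$ and $\Delta(0)=0$, so $F_\rho(0)=0$. The matrix $(I+\rho S(x)B_3)^{-1}$ equals $I$ at $x=0$, and $D\Delta(0)=(0,0,1)$. Hence the derivative of the second term at $0$ is $(0,0,\rho B_3)$, and together with $DL(0)=(B_1,B_2,0)$ this gives $DF_\rho(0)=(B_1,B_2,\rho B_3)$.

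\emph{Step 3: the norm bound.} For unit vectors $\xi,\eta$, the function $x\mapsto\langle F_\rho(x)\xi,\eta\rangle$ is continuous on $\Omega$ and holomorphic on $\mathbb E$. By \eqref{eq:dist-boundary} its supremum over $\Omega$ is attained on $b\mathbb E$. So it suffices to show $\|F_\rho(x)\|\le1$ for $x=(\omega\overline z,z,\omega)$ with $|z|\le1$ and $\omega\in\mathbb T$.

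At such a point we have
\[
\Delta(x)=\omega(1-|z|^2),\qquad P:=L(x)=\omega\overline zB_1+zB_2 .
\]
Set $Q:=\rho\,\omega B_3$, which satisfies $\|Q\|\le\rho<1$. By Lemma~\ref{lem:pencil}, $\|P\|\le|z|=:r$. A direct computation gives
\[
P^*Q=(\overline\omega zB_1^*+\overline zB_2^*)\rho\omega B_3=\rho(zB_1^*+\omega\overline zB_2^*)B_3=\rho S(x)B_3 .
\]
Consequently
\[
F_\rho(x)=P+(1-r^2)Q(I+P^*Q)^{-1},
\]
and Lemma~\ref{lem:redheffer} gives $\|F_\rho(x)\|\le1$.

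The main obstacle is this boundary identification. One must see that on $b\mathbb E$ the term $S(x)$ is exactly $\omega P^*$, and that $\Delta$ becomes $\omega(1-|z|^2)$. With the phase $\omega$ absorbed into $Q$, the formula for $F_\rho$ then matches the Redheffer expression of Lemma~\ref{lem:redheffer}.
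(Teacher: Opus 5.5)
Your proposal is correct and matches the paper's proof essentially step for step. You use the same bound $\|S(x)\|\le\max\{|x_1|,|x_2|\}$ from Lemma~\ref{lem:pencil} (you apply it to $(B_1^*,B_2^*)$, the paper applies it to $S(x)^*$), the same product-rule computation at the origin, and the same reduction to $b\mathbb E$ with $P=L(x)$, $Q=\rho\omega B_3$, $S(x)=\omega P^*$ and $\Delta(x)=\omega(1-r^2)$ feeding into Lemma~\ref{lem:redheffer}; your Cramer's-rule remark simply spells out the membership in $M_k(\operatorname{Rat}(\Omega))$ a little more explicitly than the paper does.
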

\begin{proof}
\textbf{Checking $F_\rho\in\operatorname{Rat}(\Omega)$}:
    Fix $0<\rho<1$. By Lemma~\ref{lem:pencil}
and \eqref{eq:definitions},
\begin{equation}\label{eq:S-bound}
  \|S(x)\|
  =\|\overline{x_2}B_1+\overline{x_1}B_2\|
  \leq\max\{|x_1|,|x_2|\}\leq1
  \qquad\ \text{for all}\ x\in \Omega,
\end{equation}
where the last inequality follows from
\eqref{eq:tetrablock-coordinate-bounds}.
Thus, by \eqref{eq:data} and \eqref{eq:S-bound},
\[
  \|\rho S(x)B_3\|
  \leq\rho\|S(x)\|\|B_3\|\leq\rho<1
  \qquad\text{for all}\ x\in \Omega.
\]
It follows that $I+\rho S(x)B_3$ is
invertible on a neighbourhood of $\Omega$, and hence
\[
  R_\rho(x):=(I+\rho S(x)B_3)^{-1}
\]
belongs to $M_k(\operatorname{Rat}(\Omega))$.
Hence it follows that $F_\rho\in M_k(\operatorname{Rat}(\Omega))$.

\textbf{Computing derivative of $F_\rho$:} We next compute the value and the first derivatives of $F_\rho$
at the origin. By \eqref{eq:definitions},
\[
  L(0)=S(0)=\Delta(0)=0,\qquad R_\rho(0)=I.
\]
Hence $F_\rho(0)=0$. Writing \eqref{eq:formula} as $F_\rho(x)=L(x)+\rho\Delta(x)B_3R_\rho(x),$
the product rule gives, for $j=1,2,3$,
\[
  \partial_jF_\rho(x)
  =\partial_jL(x)
   +\rho\,\partial_j\Delta(x)B_3R_\rho(x)
   +\rho\Delta(x)B_3\partial_jR_\rho(x).
\]
Since $\Delta(0)=0$ and $R_\rho(0)=I$, we obtain
\[
  \partial_jF_\rho(0)
  =\partial_jL(0)+\rho\,\partial_j\Delta(0)B_3,\qquad\text{for all }1\leq j\leq 3.
\]
Now we can easily compute that
\[
  (\partial_1L,\partial_2L,\partial_3L)=(B_1,B_2,0),
\]
and
\[
  (\partial_1\Delta,\partial_2\Delta,\partial_3\Delta)=(-x_2,-x_1,1).
\]
Consequently, we have
\begin{equation}\label{eq:derivatives}
  DF_\rho(0)
  =\bigl(\partial_1F_\rho(0),\partial_2F_\rho(0),\partial_3F_\rho(0)\bigr)
  =(B_1,B_2,\rho B_3).
\end{equation}

\textbf{Checking  $\|F_\rho\|_{\Omega}\leq 1$:} Now we show that $\|F_\rho(x)\|\leq 1$ for all $x\in\Omega.$ Note that by considering the holomorphic maps $x\mapsto \langle F_\rho(x)\xi,\eta\rangle,$ for unit vectors $\xi,\eta\in\mathbb C^k$, it is enough to show that $\sup\limits_{x\in b\mathbb E}\|F_\rho(x)\|\leq 1.$ To this end take $x=(\omega\overline z,z,\omega)\in b\mathbb E$ as in \eqref{eq:dist-boundary}, and set
\begin{equation}\label{eq:boundary-data}
r=|z|,\qquad P=L(x)=\omega\overline zB_1+zB_2,\qquad Q=\rho\omega B_3.
\end{equation}
Equation \eqref{eq:pencil} gives $\|P\|\le r$, while $\|Q\|<1$. Moreover, we observe
\begin{align}
\omega P^*&=\omega(\overline\omega zB_1^*+\overline zB_2^*)
=zB_1^*+\omega\overline zB_2^*=S(x),\label{eq:S-on-boundary}\\
\Delta(x)&=\omega-(\omega\overline z)z=\omega(1-r^2).\label{eq:Delta-on-boundary}
\end{align}
Consequently, we obtain
\[
\rho S(x)B_3=P^*(\rho\omega B_3)=P^*Q,\qquad
\rho\Delta(x)B_3=(1-r^2)Q.
\]
Substitution into \eqref{eq:formula} yields
\begin{equation}\label{eq:F-on-boundary}
F_\rho(x)=P+(1-r^2)Q(I+P^*Q)^{-1}.
\end{equation}
The proof is completed by applying Lemma \ref{lem:redheffer}.
\end{proof}

\begin{cor}\label{cor:Omega}
For every $k\ge1$,
\begin{equation}\label{eq:Omega-ball}
\mathcal{D}_{\Omega,0}^{(k)}=\left\{(B_1,B_2,B_3)\in M_k(\C^3):
\sup_{\lambda\in\T}\|B_1+\lambda B_2\|\le1,\ \|B_3\|\le1\right\}.
\end{equation}
\end{cor}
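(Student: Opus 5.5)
Denote by $\mathcal K_k$ the set on the right-hand side of \eqref{eq:Omega-ball}. We prove the two inclusions separately.

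\textbf{The inclusion $\mathcal K_k\subseteq\mathcal D^{(k)}_{\Omega,0}$.} Given $(B_1,B_2,B_3)\in\mathcal K_k$, Theorem~\ref{thm:main} supplies, for each $0<\rho<1$, a function $F_\rho\in M_k(\operatorname{Rat}(\Omega))$ with
\[
\|F_\rho\|_\Omega\le1,\qquad F_\rho(0)=0,\qquad DF_\rho(0)=(B_1,B_2,\rho B_3).
\]
Thus $(B_1,B_2,\rho B_3)$ lies in the set whose closure defines $\mathcal D^{(k)}_{\Omega,0}$. Letting $\rho\to1$, closedness gives $(B_1,B_2,B_3)\in\mathcal D^{(k)}_{\Omega,0}$.

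\textbf{The inclusion $\mathcal D^{(k)}_{\Omega,0}\subseteq\mathcal K_k$.} Since $\mathcal K_k$ is closed, it suffices to treat $DF(0)$ for $F\in M_k(\operatorname{Rat}(\Omega))$ with $\|F\|_\Omega\le1$ and $F(0)=0$. We scalarize. Fix unit vectors $\xi,\eta\in\mathbb C^k$ and put
\[
f(x)=\langle F(x)\xi,\eta\rangle .
\]
Then $f\in\operatorname{Rat}(\Omega)$, $f(0)=0$, $\|f\|_\Omega\le1$, and
\[
Df(0)=\bigl(\langle B_1\xi,\eta\rangle,\langle B_2\xi,\eta\rangle,\langle B_3\xi,\eta\rangle\bigr),\qquad (B_1,B_2,B_3):=DF(0).
\]
By Proposition~\ref{prop:scalar-tetrablock-derivatives},
\[
|\langle B_1\xi,\eta\rangle|+|\langle B_2\xi,\eta\rangle|\le1,\qquad |\langle B_3\xi,\eta\rangle|\le1 .
\]
Taking the supremum over all unit vectors $\xi,\eta$ gives $\|B_3\|\le1$ from the second bound. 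Applying Lemma~\ref{lem:pencil-scalarization} to the first bound gives $\sup_{\lambda\in\mathbb T}\|B_1+\lambda B_2\|\le1$. Hence $DF(0)\in\mathcal K_k$. Passing to the closure finishes the proof.

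All the difficult work is already contained in Theorem~\ref{thm:main}, which builds the interpolating functions. The only delicate point remaining is that Theorem~\ref{thm:main} gives $\rho B_3$ rather than $B_3$ in the third slot. This is handled by the closure built into the definition of $\mathcal D^{(k)}_{\Omega,0}$, with the approximation taken as $\rho\to1$.
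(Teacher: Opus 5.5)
Your proof is correct and follows essentially the same route as the paper. You get $\mathcal D^{(k)}_{\Omega,0}\subseteq\mathcal K_k$ by scalarizing $\langle F(x)\xi,\eta\rangle$ and applying Proposition~\ref{prop:scalar-tetrablock-derivatives} and Lemma~\ref{lem:pencil-scalarization}, and the reverse inclusion from the interpolants $F_\rho$ of Theorem~\ref{thm:main} together with closedness as $\rho\to1$.
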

\begin{proof}
Let $F\in M_k(\operatorname{Rat}(\Omega))$ be such that $F(0)=0,$ $\|F\|_{M_k(\operatorname{Rat}(\Omega))}\leq 1$ and $DF(0)=(B_1,B_2,B_3).$ For any $\xi,\eta\in \mathbb{C}^k$ with $\|\xi\|_2\leq 1$ and $\|\eta\|_2\leq 1,$ we have that $x\mapsto \langle F(x)\xi,\eta\rangle$ is in $\operatorname{Rat}(\Omega),$ fixing the origin and supremum norm less than or equal to $1.$ Therefore, by Proposition \ref{prop:scalar-tetrablock-derivatives} we get \[ |\langle B_1\xi,\eta\rangle|+|\langle B_2\xi,\eta\rangle|\leq 1 \qquad\text{and}\qquad |\langle B_3\xi,\eta\rangle|\leq 1,\qquad\text{for all}\ \xi,\eta\in (\mathbb C^k,\|.\|_2)_1.
\] Therefore, by Lemma \ref{lem:pencil-scalarization} we get one side of the desired inclusion.

Conversely, by Theorem~\ref{thm:main} we see that $(B_1,B_2,\rho B_3)\in \mathcal{D}_{\Omega,0}^{(k)}$
for every $0<\rho<1$. As $\mathcal{D}_{\Omega,0}^{(k)}$ is closed letting $\rho\to 1$ proves the reverse inclusion of $(B_1,B_2,B_3)$ in \eqref{eq:Omega-ball}. This completes the proof of the corollary.
\end{proof}

\begin{thm}\label{cor:COT-MIN}
We have a completely isometric identification
\begin{equation}\label{eq:COT-MIN}
\operatorname{COT}_0(\Omega)=\operatorname{MIN}(\ell_1^2\oplus_\infty\mathbb C).
\end{equation}
\end{thm}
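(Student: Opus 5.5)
The plan is to compare unit balls at every matrix level. By Theorem~\ref{thm:cotangent-matrix-norms} and Corollary~\ref{cor:Omega}, the closed unit ball of $M_k(\operatorname{COT}_0(\Omega))$ is
\[
\mathcal D^{(k)}_{\Omega,0}=\Bigl\{(B_1,B_2,B_3):\sup_{\lambda\in\T}\|B_1+\lambda B_2\|\le1,\ \|B_3\|\le1\Bigr\}.
\]
I will use the identification of Remark~\ref{COTisdirect}, $(c_1,c_2,c_3)\mapsto((c_1,c_2),c_3)$. Under it, it suffices to show that the closed unit ball of $M_k(\operatorname{MIN}(\ell_1^2\oplus_\infty\C))$, defined through \eqref{eq:min-matrix-norm}, is exactly this set. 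Two norms with the same closed unit ball coincide, so equality of balls for every $k$ gives the complete isometry.

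First, describe the dual space. The dual of $\ell_1^2\oplus_\infty\C$ is $\ell_\infty^2\oplus_1\C$. Its unit ball is the convex hull of functionals of two kinds:
\[
f_{\lambda_1,\lambda_2}(c)=\lambda_1c_1+\lambda_2c_2 \quad (\lambda_1,\lambda_2\in\T), \qquad g_\mu(c)=\mu c_3 \quad (\mu\in\T).
\]
The supremum in \eqref{eq:min-matrix-norm} is of a convex function of $f$. So it is attained on extreme points, and it can be restricted to these functionals; by homogeneity one may also restrict to $\lambda_1=1$. Evaluating on $(B_1,B_2,B_3)$ then gives
\[
\|(B_1,B_2,B_3)\|_{M_k(\operatorname{MIN})}=\max\Bigl\{\sup_{\lambda\in\T}\|B_1+\lambda B_2\|,\ \|B_3\|\Bigr\}.
\]
Its unit ball is precisely the set in \eqref{eq:Omega-ball}.

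To avoid relying on the extreme-point description of the dual ball, I would give a direct argument.
\begin{enumerate}
\item For the inequality ``$\ge$'', the functionals $f_{1,\lambda}$ and $g_1$ all have dual norm at most $1$. Evaluating the supremum at these functionals gives the lower bound.
\item For the inequality ``$\le$'', write an arbitrary $f$ with $\|f\|\le1$ as $f(c)=a_1c_1+a_2c_2+a_3c_3$. Its norm is $\max\{|a_1|,|a_2|\}+|a_3|\le1$.
\item Lemma~\ref{lem:pencil} gives $\|a_1B_1+a_2B_2\|\le\max\{|a_1|,|a_2|\}\,\sup_{\lambda\in\T}\|B_1+\lambda B_2\|$.
\item The triangle inequality then bounds $\|a_1B_1+a_2B_2+a_3B_3\|$ by the maximum of the two quantities.
\end{enumerate}

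I expect no real obstacle, since Corollary~\ref{cor:Omega} already carries the analytic content. The only care needed is in two places. One is correctly identifying the dual norm $\max\{|a_1|,|a_2|\}+|a_3|$. The other is applying Lemma~\ref{lem:pencil} with general complex coefficients rather than unimodular ones.
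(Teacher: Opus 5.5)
Your proposal is correct and follows the paper's proof: you compute the $\operatorname{MIN}$ norm as $\max\{\sup_{\lambda\in\T}\|B_1+\lambda B_2\|,\|B_3\|\}$ using the dual ball of $\ell_\infty^2\oplus_1\C$, and then match it with the unit ball from Corollary~\ref{cor:Omega}. The one difference is the upper bound: you use Lemma~\ref{lem:pencil} (after rescaling by $\sup_{\lambda\in\T}\|B_1+\lambda B_2\|$) together with the triangle inequality, whereas the paper scalarizes against unit vectors $\xi,\eta$ and invokes Lemma~\ref{lem:pencil-scalarization}; both routes are valid.
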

\begin{proof}
By Remark~\ref{COTisdirect}, the underlying Banach space of
$\operatorname{COT}_0(\Omega)$ is
$\ell_1^2\oplus_\infty\mathbb C$.
Let $\sum_{j=1}^3B_j\otimes e_j\in
M_k\otimes(\mathbb C^3,\|\cdot\|_{\mathcal D_{\Omega,0}})$.
We identify $(\ell_1^2\oplus_\infty\mathbb C)^*$ with
$\ell_\infty^2\oplus_1\mathbb C$. By \eqref{eq:min-matrix-norm},
\begin{equation}\label{eq:MIN-dual-formula}
  \left\|\sum_{j=1}^3B_j\otimes e_j\right\|
       _{M_k(\operatorname{MIN}(\ell_1^2\oplus_\infty\mathbb C))}
  =\sup_{\max\{|v_1|,|v_2|\}+|v_3|\leq1}
    \left\|\sum_{j=1}^3v_jB_j\right\|.
\end{equation}
For unit vectors $\xi,\eta\in\mathbb C^k$, duality gives
\[
\begin{aligned}
&\sup_{\max\{|v_1|,|v_2|\}+|v_3|\leq1}
  \left|\left\langle
    \left(\sum_{j=1}^3v_jB_j\right)\xi,\eta
  \right\rangle\right|\\
&\qquad=\max\left\{
    |\langle B_1\xi,\eta\rangle|
    +|\langle B_2\xi,\eta\rangle|,
    |\langle B_3\xi,\eta\rangle|
  \right\}.
\end{aligned}
\]
Interchanging the suprema and using Lemma~\ref{lem:pencil-scalarization},
we obtain
\begin{align*}
&\sup_{\max\{|v_1|,|v_2|\}+|v_3|\leq1}
  \left\|\sum_{j=1}^3v_jB_j\right\|\\
&\quad=\sup_{\|\xi\|_2=\|\eta\|_2=1}
  \max\left\{
    |\langle B_1\xi,\eta\rangle|
    +|\langle B_2\xi,\eta\rangle|,
    |\langle B_3\xi,\eta\rangle|
  \right\}\\
&\quad\leq\max\left\{
    \sup_{\|\xi\|_2=\|\eta\|_2=1}
    \bigl(|\langle B_1\xi,\eta\rangle|
         +|\langle B_2\xi,\eta\rangle|\bigr),
    \|B_3\|
  \right\}\\
&\quad=\max\left\{
    \sup_{\lambda\in\mathbb T}\|B_1+\lambda B_2\|,
    \|B_3\|
  \right\}.
\end{align*}
Conversely, $(v_1,v_2,v_3)=(1,\lambda,0)$ and $(0,0,1)$ belong
to the dual unit ball for every $\lambda\in\mathbb T$.
Taking these vectors in \eqref{eq:MIN-dual-formula} gives the
reverse inequality. Therefore,
\begin{equation}\label{eq:MIN-tetrablock-norm}
  \left\|\sum_{j=1}^3B_j\otimes e_j\right\|
       _{M_k(\operatorname{MIN}(\ell_1^2\oplus_\infty\mathbb C))}
  =\max\left\{
    \sup_{\lambda\in\mathbb T}\|B_1+\lambda B_2\|,
    \|B_3\|
  \right\}.
\end{equation}
By Corollary~\ref{cor:Omega} and
Theorem~\ref{thm:cotangent-matrix-norms}, the right-hand side is
also the norm of $\sum_{j=1}^3B_j\otimes e_j$ in
$M_k(\operatorname{COT}_0(\Omega))$.
Since $k$ is arbitrary, this proves \eqref{eq:COT-MIN}.
\end{proof}
\section{Parrott homomorphism and solution to rational dilation on tetrablock}\label{sec:parrott-rational-dilation}

Let us recall Parrott homomorphisms on a general compact set. We follow the exposition close to \cite{BagchiMisra1995}.
Let $\Omega\subseteq\mathbb C^m$ be compact with non-empty interior,
let $\omega\in\operatorname{Int}\Omega$, and let $A_1,\dots,A_m\in M_n$.
Write $\mathbf A=(A_1,\dots,A_m)$ and define
\begin{equation}\label{eq:parrott-derivative-pairing}
  \langle Df(\omega),\mathbf A\rangle
  :=\sum_{i=1}^m
       \frac{\partial f}{\partial z_i}(\omega)A_i,
  \qquad f\in\operatorname{Rat}(\Omega).
\end{equation}
The map
$\phi_\omega(\mathbf A,\cdot):\operatorname{Rat}(\Omega)\to M_{2n}$
defined by
\begin{equation}\label{eq:parrott-homomorphism}
  \phi_\omega(\mathbf A,f)
  =
  \begin{pmatrix}
    f(\omega)I_n & \langle Df(\omega),\mathbf A\rangle\\
    0           & f(\omega)I_n
  \end{pmatrix}
\end{equation}
is a continuous unital algebra homomorphism. 
We call such a homomorphism a \emph{Parrott-like homomorphism}.
It is \emph{contractive} if
\begin{equation}\label{eq:parrott-contractive}
  \sup\left\{
    \|\phi_\omega(\mathbf A,f)\|_{\mathrm{op}}:
    f\in\operatorname{Rat}(\Omega),\ \|f\|_\Omega\leq1
  \right\}
  \leq1.
\end{equation}

We say that $\phi_\omega(\mathbf A,\cdot)$ is
\emph{completely contractive} if
\begin{equation}\label{eq:parrott-completely-contractive}
  \left\|
    \bigl(\phi_\omega(\mathbf A,f_{ij})\bigr)_{i,j=1}^k
  \right\|_{\mathrm{op}}
  \leq
  \|(f_{ij})_{i,j=1}^k\|_{M_k(\operatorname{Rat}(\Omega))}
\end{equation}
for every $k\geq1$ and every
$(f_{ij})\in M_k(\operatorname{Rat}(\Omega))$.

This homomorphism is related to the notions of a \emph{spectral set}
and \emph{complete spectral set} for the commuting tuple
\begin{equation}\label{NWA}
  N(\omega,\mathbf A)
  =
  \left(
    \begin{pmatrix}
      \omega_1I_n & A_1\\
      0          & \omega_1I_n
    \end{pmatrix},
    \ldots,
    \begin{pmatrix}
      \omega_mI_n & A_m\\
      0          & \omega_mI_n
    \end{pmatrix}
  \right).
\end{equation}
Indeed, writing $N_i=\omega_iI_{2n}+Q_i$, where $Q_i= \begin{pmatrix}
      0 & A_i\\
      0          & 0
    \end{pmatrix},$ we have $Q_iQ_j=0$
for all $1\leq i,j\leq m$, which implies that  $N(w,\mathbf A)$ is a commuting tuple.
For every polynomial $p$, the matrix $p(N(\omega,\mathbf A))$ is
block upper triangular with both diagonal blocks equal to
$p(\omega)I_n$. Hence
\[
  \sigma\bigl(p(N(\omega,\mathbf A))\bigr)=\{p(\omega)\}.
\]
Thus $\omega$ belongs to the joint spectrum, while applying its defining
condition to the coordinate functions shows that it contains no other
point. Consequently,
\begin{equation}\label{eq:parrott-joint-spectrum}
  \text{sp}\bigl(N(\omega,\mathbf A)\bigr)
  =\{\omega\}\subseteq\Omega.
\end{equation}
Moreover, the rational functional calculus gives
\begin{equation}\label{eq:parrott-rational-calculus}
  f\bigl(N(\omega,\mathbf A)\bigr)
  =\phi_\omega(\mathbf A,f),
  \qquad f\in\operatorname{Rat}(\Omega).
\end{equation}
It follows from \eqref{eq:parrott-joint-spectrum} and
\eqref{eq:parrott-rational-calculus} that $\Omega$ is a spectral set for
$N(\omega,\mathbf A)$ if and only if \eqref{eq:parrott-contractive}
holds, and a complete spectral set if and only if
\eqref{eq:parrott-completely-contractive} holds.

For $F\in M_k(\operatorname{Rat}(\Omega))$, define
\[
  \langle DF(\omega),\mathbf A\rangle
  :=\sum_{j=1}^m
       \frac{\partial F}{\partial z_j}(\omega)\otimes A_j.
\]

The following lemma reduces contractivity at each matrix level to
functions vanishing at $\omega$.

\begin{lem}[{cf.\ \cite[Theorem~1.1]{BagchiMisra1995}}]\label{lem:parrott-derivative-test}
Let $k\geq1$. The inequality
\[
  \left\|
    \bigl(\phi_\omega(\mathbf A,f_{ij})\bigr)_{i,j=1}^k
  \right\|_{\mathrm{op}}
  \leq\|F\|_{M_k(\operatorname{Rat}(\Omega))}
\]
holds for every $F=(f_{ij})\in M_k(\operatorname{Rat}(\Omega))$
if and only if
\[
  \sup\left\{
    \|\langle DF(\omega),\mathbf A\rangle\|_{\mathrm{op}}:
    F\in M_k(\operatorname{Rat}(\Omega)),\
    \|F\|_{M_k(\operatorname{Rat}(\Omega))}\leq1,\
    F(\omega)=0
  \right\}
  \leq1.
\]
\end{lem}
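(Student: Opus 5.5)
The plan is to treat the two directions separately, reducing the nonzero value $F(\omega)$ to zero by composing with a matrix M\"obius automorphism of the unit ball of $M_k$.

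\textbf{Necessity.} Assume the inequality holds for every $F\in M_k(\operatorname{Rat}(\Omega))$. Take $F$ with $F(\omega)=0$ and $\|F\|\leq 1$. After the canonical shuffle, $(\phi_\omega(\mathbf A,f_{ij}))_{i,j}$ is unitarily equivalent to
\[
\begin{pmatrix} F(\omega)\otimes I_n & \langle DF(\omega),\mathbf A\rangle\\ 0 & F(\omega)\otimes I_n\end{pmatrix}
=\begin{pmatrix} 0 & \langle DF(\omega),\mathbf A\rangle\\ 0&0\end{pmatrix}.
\]
The norm of this matrix is $\|\langle DF(\omega),\mathbf A\rangle\|$, so the inequality gives the bound $\leq 1$ directly.

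\textbf{Sufficiency.} Assume the derivative bound. By homogeneity it suffices to treat $\|F\|_{M_k(\operatorname{Rat}(\Omega))}<1$; the case of norm exactly $1$ follows by applying the result to $tF$ and letting $t\to1$. Set $C=F(\omega)$, so $\|C\|<1$. Introduce the matrix M\"obius map
\[
\theta_C(X)=(I-CC^*)^{-1/2}(X-C)(I-C^*X)^{-1}(I-C^*C)^{1/2}.
\]
It maps the closed unit ball of $M_k$ into itself, sends $C$ to $0$, and has inverse $\theta_{-C}$. Define $G=\theta_C\circ F$. Since $\|F(x)\|<1$ on $\Omega$, the factor $I-C^*F$ is invertible on a neighbourhood of $\Omega$. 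Hence $G\in M_k(\operatorname{Rat}(\Omega))$, with $G(\omega)=0$ and $\|G\|\leq1$, so the hypothesis gives $\|\langle DG(\omega),\mathbf A\rangle\|\leq1$.

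The key step is to write $\Phi_k(F):=(\phi_\omega(\mathbf A,f_{ij}))$ in terms of $\Phi_k(G)$. Because $\phi_\omega(\mathbf A,\cdot)\otimes \mathrm{id}_{M_k}$ is a unital algebra homomorphism on $M_k(\operatorname{Rat}(\Omega))$, it respects the formula $F=\theta_{-C}(G)$. Scalar matrices $C$ map to $C\otimes I_{2n}$, and inverses of invertible elements map to inverses. Therefore
\[
\Phi_k(F)=\theta_{-C\otimes I}\bigl(\Phi_k(G)\bigr),
\]
where $\theta_{-C\otimes I}$ is the operator M\"obius map, which is defined because $\|C\|<1$.

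By the shuffle computation above, $\Phi_k(G)$ is unitarily equivalent to $\begin{pmatrix}0&\langle DG(\omega),\mathbf A\rangle\\0&0\end{pmatrix}$, which is a contraction. The operator M\"obius map $\theta_{-C\otimes I}$ sends contractions to contractions. Hence $\|\Phi_k(F)\|\leq 1$.

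\textbf{Main obstacle.} The delicate part is justifying that $\theta_{-C\otimes I}$ preserves contractivity at the operator level, and that the homomorphism intertwines with $\theta_{-C}$; this includes checking that the inverse $(I+C^*G)^{-1}$ passes through $\Phi_k$. I would first try the standard identity
\[
I-\theta_C(X)^*\theta_C(X)=(I-C^*C)^{1/2}(I-X^*C)^{-1}(I-X^*X)(I-C^*X)^{-1}(I-C^*C)^{1/2},
\]
which shows positivity is preserved. For the intertwining, I would argue that the homomorphism carries $(I+C^*G)(I+C^*G)^{-1}=I$ to the corresponding identity for the images.
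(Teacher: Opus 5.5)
Your proof is correct. The paper gives no proof of this lemma, only a pointer to Bagchi--Misra, so your argument supplies one.

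The steps check out. The canonical shuffle settles necessity. For sufficiency:
\begin{itemize}
\item $G=\theta_C\circ F$ and $(I+C^*G)^{-1}$ lie in $M_k(\operatorname{Rat}(\Omega))$ by Cramer's rule.
\item $\Phi_k=\phi_\omega(\mathbf A,\cdot)\otimes\mathrm{id}_{M_k}$ is a unital homomorphism sending a constant $M$ to $M\otimes I_{2n}$. Hence $\Phi_k(F)=\theta_{-C\otimes I}(\Phi_k(G))$, and this is well defined because $I+(C^*\otimes I)\Phi_k(G)=\Phi_k(I+C^*G)$ is invertible.
\item The identity you quote for $I-\theta_D(X)^*\theta_D(X)$ is correct; it reduces to $D(I-D^*D)^{-1}=(I-DD^*)^{-1}D$.
\end{itemize}

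The standard argument for results of this kind also starts from $G=\theta_C\circ F$, but it proceeds by computation. One checks that $\partial_jG(\omega)=(I-CC^*)^{-1/2}\,\partial_jF(\omega)\,(I-C^*C)^{-1/2}$. One then uses the Schur-complement criterion: $\begin{pmatrix} C\otimes I_n & X\\ 0 & C\otimes I_n\end{pmatrix}$ is a contraction iff $\|((I-CC^*)^{-1/2}\otimes I_n)\,X\,((I-C^*C)^{-1/2}\otimes I_n)\|\leq1$. For $k=1$ and $C=a$ a scalar, this is the familiar condition $\|X\|\leq 1-|a|^2$. That route gives an explicit test, in terms of $F(\omega)$ and $DF(\omega)$, for when a single $\Phi_k(F)$ is contractive.

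Your route needs neither the derivative of the M\"obius composite nor the block-matrix criterion. The Parrott form of $\phi_\omega$ enters only through $\|\Phi_k(G)\|=\|\langle DG(\omega),\mathbf A\rangle\|$ when $G(\omega)=0$. So the same argument shows more generally that any unital homomorphism of $\operatorname{Rat}(\Omega)$ whose $k$-th amplification is contractive on matrix functions vanishing at $\omega$ is contractive at level $k$. And since also $\Phi_k(G)=\theta_{C\otimes I}(\Phi_k(F))$, you still recover the function-by-function equivalence.
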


\begin{thm}[{Paulsen \cite[Theorem~5.4]{Paulsen1992}}]\label{thm:cotangent-spectral-set}
Let $\Omega\subseteq\mathbb C^m$ be compact, let
$\omega\in\operatorname{Int}\Omega$, and let $Y$ be the underlying
Banach space of $\operatorname{COT}_\omega(\Omega)$. If
\[
  \operatorname{COT}_\omega(\Omega)
  \neq\operatorname{MAX}(Y)
\]
under the canonical identification, then there are $n\geq1$ and
$A_1,\dots,A_m\in M_n$ such that the commuting tuple
$N(\omega,\mathbf A)$ has $\Omega$ as a spectral set but not as a
complete spectral set.
\end{thm}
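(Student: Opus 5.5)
The plan is to use Lemma~\ref{lem:parrott-derivative-test} to translate contractivity and complete contractivity of $\phi_\omega(\mathbf A,\cdot)$ into norm statements about the linear map
\[
  u_{\mathbf A}:\operatorname{COT}_\omega(\Omega)\to M_n,\qquad u_{\mathbf A}(c_1,\dots,c_m)=\sum_{i=1}^m c_iA_i .
\]
We then choose $\mathbf A$ so that $u_{\mathbf A}$ is contractive but not completely contractive.

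For the translation, note that for $F\in M_k(\operatorname{Rat}(\Omega))$ we have $\langle DF(\omega),\mathbf A\rangle=(u_{\mathbf A})_k(DF(\omega))$, up to the canonical shuffle $M_k\otimes M_n\cong M_n\otimes M_k$, which is isometric. By Theorem~\ref{thm:cotangent-matrix-norms}, the closure of the set of such $DF(\omega)$, with $\|F\|\le 1$ and $F(\omega)=0$, is exactly the unit ball of $M_k(\operatorname{COT}_\omega(\Omega))$. Continuity of $(u_{\mathbf A})_k$ lets us pass to the closure. Lemma~\ref{lem:parrott-derivative-test} then gives that the level-$k$ inequality holds if and only if $\|(u_{\mathbf A})_k\|\le1$. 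Combined with the remarks after \eqref{eq:parrott-rational-calculus}, we get:
\begin{itemize}
\item $\Omega$ is a spectral set for $N(\omega,\mathbf A)$ if and only if $\|u_{\mathbf A}:Y\to M_n\|\le1$;
\item $\Omega$ is a complete spectral set for $N(\omega,\mathbf A)$ if and only if $\|u_{\mathbf A}\|_{cb}\le1$ from $\operatorname{COT}_\omega(\Omega)$.
\end{itemize}

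Next we produce $\mathbf A$. Suppose $\operatorname{COT}_\omega(\Omega)\neq\operatorname{MAX}(Y)$. The identity $\operatorname{MAX}(Y)\to\operatorname{COT}_\omega(\Omega)$ is always completely contractive, so some $k$ and some $X\in M_k(\mathbb C^m)$ satisfy $\|X\|_{M_k(\operatorname{COT})}<1<\|X\|_{M_k(\operatorname{MAX}(Y))}$. By the definition \eqref{eq:max-matrix-norm}, there is a contraction $T:Y\to\mathcal B(\mathcal H)$ with $\|T_k(X)\|>1$. We need to reduce to finite dimensions. Let $P$ be the orthogonal projection onto a finite-dimensional subspace of $\mathcal H$ large enough that $\|(P\,T(\cdot)\,P)_k(X)\|>1$; such a subspace exists by strong approximation of the finitely many operators $T(x_{ij})$. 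Compression keeps the map contractive. We identify $P\mathcal H$ with $\mathbb C^n$ and set $A_i=PT(e_i)P\in M_n$.

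Then $u_{\mathbf A}=PT(\cdot)P$ is contractive on $Y$. On the other hand, $\|(u_{\mathbf A})_k(X)\|>1\ge\|X\|_{M_k(\operatorname{COT})}$, so $u_{\mathbf A}$ is not completely contractive on $\operatorname{COT}_\omega(\Omega)$. The equivalences established in the first step now give exactly the conclusion of the theorem.

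The main obstacle I expect is the careful identification in the first step. One has to check that the supremum in Lemma~\ref{lem:parrott-derivative-test} over the non-closed set of derivatives agrees with the norm of $(u_{\mathbf A})_k$ on the closed ball $\mathcal D^{(k)}_{\Omega,\omega}$. This follows from continuity together with Theorem~\ref{thm:cotangent-matrix-norms}. One also has to keep track of the tensor-flip identification, which is only bookkeeping. Everything else is the standard MIN/MAX duality argument.
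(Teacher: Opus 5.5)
Your proposal is correct and follows essentially the paper's route. You produce a contraction $u:Y\to M_n$ that is not completely contractive on $\operatorname{COT}_\omega(\Omega)$, set $A_j=u(e_j)$, identify $\|u_k\|$ with the supremum in Lemma~\ref{lem:parrott-derivative-test}, and conclude. You also make explicit the compression to a finite-dimensional subspace and the passage to the closure $\mathcal D^{(k)}_{\Omega,\omega}$, which the paper leaves implicit in the phrase ``by the definition of $\operatorname{MAX}(Y)$''.
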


\begin{proof}
 By the definition of $\operatorname{MAX}(Y)$ as in \eqref{eq:max-matrix-norm}, there
exists $n\geq1$ and a contractive linear map $u:\operatorname{COT}_\omega(\Omega)\to M_n$ which is
not completely contractive. Set $A_j=u(e_j)$, where
$e_1,\dots,e_m$ is the standard basis of $\mathbb C^m$.

By the definition of the matrix unit balls of $\operatorname{COT}_\omega(\Omega)$, for every $k\geq1$,
\[
  \|u_k\|
  =\sup\left\{
    \|\langle DF(\omega),\mathbf A\rangle\|_{\mathrm{op}}:
    F\in M_k(\operatorname{Rat}(\Omega)),\
    \|F\|_{M_k(\operatorname{Rat}(\Omega))}\leq1,\
    F(\omega)=0
  \right\}.
\]
Since $\|u\|\leq1$ but $\|u_k\|>1$ for some $k$, Lemma~\ref{lem:parrott-derivative-test}
shows that $\phi_\omega(\mathbf A,\cdot)$ is contractive but not
completely contractive. 
\end{proof}
\begin{cor}\label{mosttrivia}
   There is a finite-dimensional Hilbert space $\mathcal H$ and a commuting tuple $(T_1,T_2,T_3)$ on $\mathcal H$ with $\text{sp}(T)\subseteq\overline{\mathbb E}$ such that $\overline{\mathbb E}$ is a spectral set for $(T_1,T_2.T_3)$ but not a complete spectral set for $(T_1,T_2,T_3).$
\end{cor}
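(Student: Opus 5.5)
The plan is to combine Theorem~\ref{cor:COT-MIN} with Paulsen's criterion, Theorem~\ref{thm:cotangent-spectral-set}, applied at the interior point $\omega=0$ of $\Omega=\overline{\mathbb E}$. First I will check the hypotheses. The origin lies in $\operatorname{Int}\Omega$ because $\mathbb E$ is open and contains $0$: for $x=0$ the defining expression is $1\neq0$. By Remark~\ref{COTisdirect}, the underlying Banach space of $\operatorname{COT}_0(\Omega)$ is $Y=\ell_1^2\oplus_\infty\mathbb C$, which is a three-dimensional complex Banach space.

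The key step is to verify that $\operatorname{COT}_0(\Omega)\neq\operatorname{MAX}(Y)$. By Theorem~\ref{cor:COT-MIN}, $\operatorname{COT}_0(\Omega)=\operatorname{MIN}(Y)$ completely isometrically, under the canonical identification $(c_1,c_2,c_3)\mapsto((c_1,c_2),c_3)$. So it suffices to know that $\operatorname{MIN}(Y)\neq\operatorname{MAX}(Y)$. This holds for every complex Banach space of dimension at least $3$, as recorded in \cite[Corollary~3.9]{Pisier2003}. That is, the identity map $\operatorname{MIN}(Y)\to\operatorname{MAX}(Y)$ is not completely contractive, so some matrix level $k$ has strictly different norms.

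Theorem~\ref{thm:cotangent-spectral-set} then supplies $n\ge1$ and $A_1,A_2,A_3\in M_n$ such that $N(0,\mathbf A)$ has $\Omega$ as a spectral set but not as a complete spectral set. I will take $\mathcal H=\mathbb C^{2n}$ and
\[
T_j=\begin{pmatrix}0&A_j\\0&0\end{pmatrix}, \qquad j=1,2,3,
\]
which is exactly $N(0,\mathbf A)$ in \eqref{NWA}. These operators commute, since $T_iT_j=0$. By \eqref{eq:parrott-joint-spectrum}, $\text{sp}(T)=\{0\}\subseteq\overline{\mathbb E}$. This gives the corollary and also the form stated in Theorem~\ref{mainthm}.

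There is no real obstacle, since everything is already assembled. The only point needing care is the meaning of ``$\neq$ under the canonical identification''. Paulsen's criterion needs a contractive map $u:\operatorname{COT}_0(\Omega)\to M_n$ that is not completely contractive. If instead I worked directly from $\operatorname{MIN}\neq\operatorname{MAX}$, I would note that the identity $\operatorname{MIN}(Y)\to\operatorname{MAX}(Y)$ fails to be completely contractive at some level $k$. Since the $k$-th MAX norm is a supremum over contractions $T:Y\to\mathcal B(\mathcal H)$, some contraction $T$ into $\mathcal B(\mathcal H)$ is not completely contractive on $\operatorname{MIN}(Y)$. Compressing $T$ to a suitable finite-dimensional subspace of $\mathcal H$, one spanned by vectors nearly norming the relevant $k\times k$ matrix, yields a contraction into $M_n$ with the same failure. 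This is precisely the reduction used inside the proof of Theorem~\ref{thm:cotangent-spectral-set}.

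Rational dilation then fails by Arveson's theorem, as recalled in the introduction.
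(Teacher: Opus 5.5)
Your proposal is correct and follows essentially the same route as the paper: combine $\operatorname{COT}_0(\overline{\mathbb E})=\operatorname{MIN}(\ell_1^2\oplus_\infty\mathbb C)$ with Pisier's result that $\operatorname{MIN}\neq\operatorname{MAX}$ in dimension $\geq 3$, then apply Paulsen's criterion at $\omega=0$. Your additional remarks fill in details the paper leaves implicit. These are that $0\in\operatorname{Int}\overline{\mathbb E}$, the explicit form of $N(0,\mathbf A)$ with its joint spectrum $\{0\}$, and the compression to a finite-dimensional subspace that yields a contractive, not completely contractive, map into $M_n$.
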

\begin{proof}
    Note that we have already proved in Theorem \ref{cor:COT-MIN} that $\operatorname{COT}_0(\overline{\mathbb E})=\operatorname{MIN}(Y)$ completely isometrically where $Y=\ell_1^2\oplus_\infty \mathbb C.$ By \cite[Corollary 3.9]{Pisier2003} we have $\operatorname{MIN}(\ell_1^2\oplus_\infty \mathbb C)$ is not completely isometric to $ \operatorname{MAX}(\ell_1^2\oplus_\infty \mathbb C).$ The proof of the corollary is then an application of Theorem \ref{thm:cotangent-spectral-set}.
\end{proof}
\textbf{Acknowledgements:} The second-named author acknowledges the support 
the Prime Minister Early Career Research Grant
(ANRF/ECRG/2024/000699/PMS), and the ANRF grant
ANRF/ARGM/2025/000895/MTR.
The first-named author acknowledges the hospitality of
the Institute of Mathematical Sciences, Chennai, where this work was carried out.

\textbf{AI Declaration:} The authors formulated the overall strategy, selected the methods, and developed, simplified, and modified the proofs. GPT-6 (OpenAI) assisted with mathematical discussions, parts of the proof development, and LaTeX editing. The authors carefully reviewed the suggestions arising from these AI interactions, revised and incorporated them where appropriate, and independently verified the resulting mathematical arguments. The authors take full responsibility for all mathematical content in the final manuscript.


\begin{thebibliography}{10}

\bibitem{AWY2007}
A.~A. Abouhajar, M.~C. White, and N.~J. Young, \emph{A {Schwarz} lemma for a
  domain related to {$\mu$}-synthesis}, J. Geom. Anal. \textbf{17} (2007), 717--750. Corrected version: \url{https://arxiv.org/abs/0708.0637v3}.

\bibitem{ja}
J.~Agler, \emph{Rational dilation on an annulus}, Ann. of Math.
  \textbf{121} (1985), 537--563.

\bibitem{ahr}
J.~Agler, J.~Harland, and B.~J. Raphael, \emph{Classical function theory,
  operator dilation theory, and machine computation on multiply-connected
  domains}, Mem. Amer. Math. Soc. 191(892) (2008), viii+159 pp.

\bibitem{ay}
J.~Agler and N.~J. Young, \emph{A commutant lifting theorem for a domain in
  {$\mathbb C^2$} and spectral interpolation}, J. Funct. Anal. \textbf{161}
  (1999), 452--477.

\bibitem{aw}
H.~Alexander and J.~Wermer, \emph{Several complex variables and {Banach}
  algebras}, third ed., Graduate Texts in Mathematics, vol.~35,
  Springer, 1998.

\bibitem{ta}
T.~And\^{o}, \emph{On a pair of commutative contractions}, Acta Sci. Math. 
  \textbf{24} (1963), 88--90.

\bibitem{wa}
W. Arveson, \emph{Subalgebras of {$C^*$}-algebras. {II}}, Acta Math.
  \textbf{128} (1972), 271--308.

\bibitem{BagchiBhattacharyyaMisra2002}
B.~Bagchi, T.~Bhattacharyya, and G.~Misra, \emph{Some thoughts on {Ando}'s
  theorem and {Parrott}'s example}, Linear Algebra Appl. \textbf{341} (2002),
  357--367.

\bibitem{BagchiMisra1995}
B.~Bagchi and G.~Misra, \emph{Contractive homomorphisms and tensor product
  norms}, Integral Equations Operator Theory \textbf{21} (1995),
  255--269.

\bibitem{bs}
J.~A. Ball and H.~Sau, \emph{Rational dilation of tetrablock contractions
  revisited}, J. Funct. Anal. \textbf{278} (2020), no.~1, 108275, 14 pp.

\bibitem{tb}
T.~Bhattacharyya, \emph{The tetrablock as a spectral set}, Indiana Univ. Math.
  J. \textbf{63} (2014), 1601--1629.

\bibitem{BhattacharyyaMisra2005}
T.~Bhattacharyya and G.~Misra, \emph{Contractive and completely contractive
  homomorphisms of planar algebras}, Illinois J. Math. \textbf{49} (2005),
  no.~4, 1181--1201.

\bibitem{ab}
A.~Browder, \emph{Introduction to function algebras}, W. A. Benjamin Inc., New York, 1969.

\bibitem{Conway1990}
J.~B. Conway, \emph{A course in functional analysis}, second ed., Graduate
  Texts in Mathematics, vol.~96, Springer-Verlag, New York, 1990.

\bibitem{dm}
M.~A. Dritschel and S.~McCullough, \emph{The failure of rational dilation on a
  triply connected domain}, J. Amer. Math. Soc. \textbf{18} (2005), 873--918.

\bibitem{EffrosRuan2000}
E.~G. Effros and Z.-J. Ruan, \emph{Operator spaces}, London Mathematical
  Society Monographs. New Series, vol.~23, The Clarendon Press, Oxford
  University Press, New York, 2000.

\bibitem{FoiasFrazho1990}
C.~Foia{\c{s}} and A.~E. Frazho, \emph{The commutant lifting approach to
  interpolation problems}, Operator Theory: Advances and Applications, vol.~44,
  Birkh{\"a}user Verlag, Basel, 1990.

\bibitem{KilincKarzanNemirovski2025}
F.~K{\i}l{\i}n{\c{c}}-Karzan and A.~Nemirovski, \emph{Essential mathematics for
  convex optimization}, Cambridge University Press, Cambridge, 2025,
  Accompanying \emph{Solutions to selected exercises}, Exercise~II.10 and its
  solution, p.~41, \url{https://www2.isye.gatech.edu/~nemirovs/KKN_SMM.pdf}.

\bibitem{Misra1994}
G.~Misra, \emph{Completely contractive {Hilbert} modules and {Parrott}'s
  example}, Acta Math. Hungar. \textbf{63} (1994), no.~3, 291--303.

\bibitem{MR1033916}
G.~Misra and N.~S. Narasimha~Sastry, \emph{Bounded modules, extremal problems,
  and a curvature inequality}, J. Funct. Anal. \textbf{88} (1990),
  118--134.

\bibitem{MR1058968}
G.~Misra and N.~S. Narasimha~Sastry, \emph{Completely bounded modules and associated extremal problems}, J.
  Funct. Anal. \textbf{91} (1990), 213--220.

\bibitem{MisraPal2018}
G.~Misra and A.~Pal, \emph{Curvature inequalities for operators in the
  {Cowen--Douglas} class and localization of the {Wallach} set}, J. Anal. Math.
  \textbf{136} (2018), 31--54.

\bibitem{MR3979938}
G.~Misra, A.~Pal, and C.~Varughese, \emph{Contractivity and complete
  contractivity for finite dimensional {Banach} spaces}, J. Operator Theory
  \textbf{82} (2019), no.~1, 23--47.

\bibitem{MR1305512}
G.~Misra and V.~Pati, \emph{Contractive and completely contractive modules,
  matricial tangent vectors and distance decreasing metrics}, J. Operator
  Theory \textbf{30} (1993), no.~2, 353--380.

\bibitem{sp1}
S.~Pal, \emph{The failure of rational dilation on the tetrablock}, J. Funct.
  Anal. \textbf{269} (2015), 1903--1924.

\bibitem{sp}
S.~Parrott, \emph{Unitary dilations for commuting contractions}, Pacific J.
  Math. \textbf{34} (1970), 481--490.

\bibitem{Paulsen1992}
V.~I. Paulsen, \emph{Representations of function algebras, abstract operator
  spaces, and {Banach} space geometry}, J. Funct. Anal. \textbf{109} (1992), 113--129.


\bibitem{Pisier2003}
G.~Pisier, \emph{Introduction to operator space theory}, London Mathematical
  Society Lecture Note Series, vol. 294, Cambridge University Press, Cambridge,
  2003.

\bibitem{MR4171378}
S.~K. Ray, \emph{On isometric embedding {$\ell_p^m\to S_\infty$} and unique
  operator space structure}, Bull. Lond. Math. Soc. \textbf{52} (2020), no.~3,
  437--447.

\bibitem{Redheffer1960}
R.~M. Redheffer, \emph{On a certain linear fractional transformation}, J. Math.
  Phys. \textbf{39} (1960), 269--286.

\bibitem{Ruan1988}
Z.-J. Ruan, \emph{Subspaces of {$C^*$}-algebras}, J. Funct. Anal. \textbf{76}
  (1988), no.~1, 217--230.

\bibitem{Rudin1991}
W.~Rudin, \emph{Functional analysis}, second ed., International Series in Pure
  and Applied Mathematics, McGraw-Hill, Inc., New York, 1991.

\bibitem{bn}
B.~Sz.-Nagy, \emph{Sur les contractions de l'espace de {Hilbert}}, Acta Sci.
  Math. (Szeged) \textbf{15} (1953), 87--92.

\bibitem{MR1365226}
D.~Timotin, \emph{Redheffer products and characteristic functions}, J. Math.
  Anal. Appl. \textbf{196} (1995), 823--840.

\bibitem{vn}
J.~von Neumann, \emph{Eine {Spektraltheorie} f{\"u}r allgemeine {Operatoren}
  eines unit{\"a}ren {Raumes}}, Math. Nachr. \textbf{4} (1951), 258--281.

\bibitem{lw}
L.~Waelbroeck, \emph{Le calcul symbolique dans les alg{\`e}bres commutatives},
  J. Math. Pures Appl. \textbf{33} (1954), 147--186.

\end{thebibliography}
\end{document}